\renewcommand{\a}{\alpha}
\renewcommand{\b}{\beta}
\newcommand{\e}{\epsilon}
\newcommand{\s}{\sigma}
\renewcommand{\O}{\Omega}
\newcommand{\leqs}{\leqslant}
\newcommand{\geqs}{\geqslant}
\newcommand{\vs}{\vspace{3mm}}
\newcommand{\imod}[1]{\allowbreak\mkern4mu({\operator@font mod}\,\,#1)}
\theoremstyle{plain}
\newtheorem{theorem}{Theorem} 
\newtheorem{corol}[theorem]{Corollary}
\newtheorem{thm}{Theorem}[section] 
\newtheorem{lem}[thm]{Lemma}
\newtheorem{prop}[thm]{Proposition}
\newtheorem*{theorem*}{Theorem} 
\newtheorem*{conj*}{Conjecture}
\theoremstyle{definition}
\newtheorem{rem}[thm]{Remark}
\newtheorem{defn}[thm]{Definition}
\newtheorem*{def-non}{Definition} 
\newtheorem{remk}{Remark}
\begin{document}

\title[On the involution fixity of simple groups]{On the involution fixity of simple groups}

\author{Timothy C. Burness}
\address{T.C. Burness, School of Mathematics, University of Bristol, Bristol BS8 1UG, UK}
\email{t.burness@bristol.ac.uk}

\author{Elisa Covato}
\address{E. Covato, Bristol, UK}
\email{elisa.covato@gmail.com}

\date{\today} 

\begin{abstract}
Let $G$ be a finite permutation group of degree $n$ and let ${\rm ifix}(G)$ be the involution fixity of $G$, which is the maximum number of fixed points of an involution. In this paper we study the involution fixity of almost simple primitive groups whose socle $T$ is an alternating or sporadic group; our main result classifies the groups of this form with ${\rm ifix}(T) \leqs n^{4/9}$. This builds on earlier work of Burness and Thomas, who studied the case where $T$ is an exceptional group of Lie type, and it strengthens the bound ${\rm ifix}(T) > n^{1/6}$ (with prescribed exceptions), which was proved by Liebeck and Shalev in 2015. A similar result for classical groups will be established in a sequel. 
\end{abstract}

\maketitle

\section{Introduction}\label{s:intro}

Let $G \leqs {\rm Sym}(\O)$ be a permutation group on a finite set $\O$. Let ${\rm fix}(g)$ be the number of elements in $\O$ fixed by $g \in G$ and set 
\[
{\rm fpr}(g,\O) = \frac{{\rm fix}(g)}{|\O|},
\]
which is called the \emph{fixed point ratio} of $g$. This is a classical concept in permutation group theory and bounds on fixed point ratios find a wide range of applications, especially in the context of primitive groups. For instance, we refer the reader to the recent survey article \cite{Bur} for a discussion of some powerful  applications concerning bases for permutation groups, the random generation of simple groups and the structure of monodromy groups of coverings of the Riemann sphere.

In this paper we study ${\rm fix}(g)$ in the setting where $G$ is an almost simple primitive permutation group and $g \in G$ is an involution. We call 
\[
{\rm ifix}(G) = \max\{{\rm fix}(g) \,:\, \mbox{$g \in G$ is an involution}\}
\]
the \emph{involution fixity} of $G$ and we are interested in comparing ${\rm ifix}(G)$ with the degree of $G$. This is closely related to the more general concept of \emph{fixity}, which is defined to be the maximal number of points fixed by a non-identity element. The latter notion was originally introduced by Ronse \cite{Ronse} in 1980 and there are more recent papers by Liebeck, Saxl and Shalev \cite{LSh,SS} on the fixity of primitive groups (also see \cite{MW}, where the transitive groups with fixity at most $2$ are studied). Let us also  highlight work of Bender \cite{Bender} from the early 1970s, which determines the finite transitive groups $G$ with ${\rm ifix}(G) = 1$.

Our main motivation stems from \cite{LSh}, where Liebeck and Shalev use the O'Nan-Scott theorem to investigate the structure of the primitive groups of  degree $n$ with fixity at most $n^{1/6}$. Their main result for an almost simple group $G$ with socle $T$ shows that ${\rm ifix}(T) > n^{1/6}$, with specified exceptions (see \cite[Theorem 4]{LSh}). With a view towards applications, it is desirable to strengthen this lower bound (at the expense of some additional exceptions). The first step in this direction was taken by Burness and Thomas in \cite{BThomas}, where the almost simple groups with socle an  
exceptional group of Lie type $T$ and ${\rm ifix}(T) \leqs n^{4/9}$ are determined. In this paper, we extend the analysis in \cite{BThomas} to the almost simple groups with socle an alternating or sporadic group. The remaining classical groups will be handled in a sequel, which will complete our study of involution fixity for almost simple primitive groups.

Our main result is the following. In the statement, 
$\mathcal{S}$ denotes the set of finite simple groups that are either alternating or sporadic. 

\begin{theorem}\label{t:main}
Let $G \leqs {\rm Sym}(\O)$ be an almost simple primitive permutation group of degree $n$ with socle $T \in \mathcal{S}$ and point stabilizer $H$. Set $H_0 = H \cap T$. Then one of the following holds: 
\begin{itemize}\addtolength{\itemsep}{0.2\baselineskip}
\item[{\rm (i)}] ${\rm ifix}(T) > n^{4/9}$.
\item[{\rm (ii)}] $H_0$ has odd order and ${\rm ifix}(T) = 0$.
\item[{\rm (iii)}] $(T,n) = (A_5,5)$ and ${\rm ifix}(T) =1$. 
\item[{\rm (iv)}] $n^{\a} \leqs {\rm ifix}(T) \leqs n^{4/9}$ and $(T,H_0,{\rm ifix}(T),n,\a)$ is recorded in Table \ref{tab:main}.
\end{itemize}
\end{theorem}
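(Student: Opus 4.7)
The plan is to exploit the standard fixed-point formula
\[
{\rm fix}(t) = |C_T(t)| \cdot \frac{|t^T \cap H_0|}{|H_0|}
\]
for an involution $t \in T$ acting on $\O = T/H_0$, which reduces bounding ${\rm ifix}(T)$ from below to locating a $T$-class of involutions with enough representatives in $H_0$. Note that clause (ii) is immediate: if $|H_0|$ is odd then no $T$-conjugate of any involution can lie in $H_0$, forcing ${\rm ifix}(T) = 0$; from now on we assume $|H_0|$ is even. The argument then splits according to whether $T$ is sporadic or alternating. For sporadic $T$ the problem is finite: I would run through the known list of conjugacy classes of maximal subgroups $H_0$ for each of the $26$ sporadic simple groups, compute ${\rm ifix}(T)$ directly using class-fusion data in the \textsc{Atlas} and the character-table libraries of \textsc{Gap} and \textsc{Magma}, and compare with $n^{4/9} = |T:H_0|^{4/9}$. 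For the very largest sporadics, where some maximal subgroups or fusion maps are not completely tabulated, one instead produces an adequate \emph{lower} bound by picking any convenient involution $t \in H_0$ and using $|t^T \cap H_0| \geqs 1$ together with the known centraliser order.

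For $T = A_m$ alternating, I would apply the O'Nan--Scott classification of maximal subgroups of $A_m$, which gives three broad types: intransitive subgroups $H_0 = (S_k \times S_{m-k}) \cap A_m$ (action on $k$-subsets of $\{1,\dots,m\}$); transitive imprimitive subgroups $H_0 \leqs (S_a \wr S_b) \cap A_m$ with $ab = m$ (action on partitions of $\{1,\dots,m\}$ into $b$ blocks of size $a$); and primitive subgroups. The primitive case is easy because known bounds on the order of a primitive permutation group of degree $m$ (Mar\'oti's theorem, or the classical polynomial bounds in the affine case) force $|H_0| \leqs m^{O(\log m)}$, so $n$ dwarfs ${\rm ifix}(T) \leqs |H_0|$ for all but finitely many $m$, and a short check settles the remaining small degrees. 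The bulk of the work therefore lies in the intransitive and imprimitive cases, where one produces involutions in $H_0$ with many $T$-conjugates also in $H_0$. In the subset action a direct count gives that an involution of cycle type $2^a 1^{m-2a}$ in $A_m$ fixes exactly $\sum_{i} \binom{a}{i}\binom{m-2a}{k-2i}$ of the $\binom{m}{k}$ $k$-subsets, and one chooses $a$ as a function of $(k,m)$ to maximise this sum. Similar, though more intricate, generating-function counts handle the partition action.

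The main obstacle will be obtaining the tight asymptotic comparison needed to upgrade the Liebeck--Shalev exponent $1/6$ to $4/9$ in the subset and partition actions: one must pick an optimal involution cycle type and estimate the resulting binomial sums (via Stirling or a saddle-point argument) uniformly in the parameters $k$ and $m$, while separately treating the boundary regimes where $k$ is very small or close to $m/2$ and $m$ is small. These are precisely the ranges where the entries of Table \ref{tab:main} arise, so pinning down the exact finite list of exceptions --- rather than just an asymptotic statement --- will require a combination of sharp combinatorial estimates with a substantial but finite computer check of the low-degree cases.
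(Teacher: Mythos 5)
Your overall architecture — clause (ii) by parity, a finite computational check for sporadic socles, and the O'Nan--Scott theorem with explicit involutions for $T=A_m$ — matches the paper, and your counting formula for the subset action is correct (the paper simply takes $t=(1,2)(3,4)$ rather than optimising the cycle type, which already gives ${\rm fix}(t)>n^{1/2}$). However, there is a genuine gap in your treatment of the primitive point stabilizers of $A_m$, which in the paper comprises the affine, product-type, diagonal-type and almost simple cases. The inequality you invoke, ${\rm ifix}(T)\leqs |H_0|$, is false: by \eqref{e:fix} we have ${\rm fix}(t)=|t^T\cap H_0|\cdot|C_T(t)|/|H_0|$, which can vastly exceed $|H_0|$ (for $H_0={\rm AGL}_{1}(p)\cap T$ with $p\equiv 1\imod{4}$ one gets ${\rm fix}(t)=2^{(p-1)/2}(\tfrac{1}{2}(p-1))!/(p-1)$, enormously larger than $|H_0|=\tfrac{1}{2}p(p-1)$; see also the ${\rm Co}_{1}$ entry of Table \ref{tab:main}). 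More seriously, the argument points in the wrong direction: in these cases the theorem demands the \emph{lower} bound ${\rm ifix}(T)>n^{4/9}$, so showing that ``$n$ dwarfs ${\rm ifix}(T)$'' would place these groups among the exceptions, contradicting the statement. The correct mechanism is the opposite one: when $|H_0|$ is small, $n=|T:H_0|$ is close to $|T|$, and then a \emph{single} involution $t\in H_0$ already gives ${\rm fix}(t)\geqs n/|t^T|>n^{4/9}$, because every involution class in $A_m$ with $m>20$ has size less than $|T|^{11/20}$ (this is the content of Lemmas \ref{l:basic}--\ref{l:cor}).

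Two further points. First, the order bound $|H_0|\leqs m^{O(\log m)}$ fails precisely for the Mar\'oti exceptions — product-type subgroups $S_k\wr S_r$ of degree $k^r$ and standard actions of almost simple groups on subsets, partitions or subspaces — so even after correcting the direction of the argument these families need the separate estimates carried out in Propositions \ref{p:prod}, \ref{p:as3} and \ref{p:as4}. Second, the affine case cannot be dismissed as easy: it is the source of the odd-order stabilizers in (ii), of the $(A_9,3^2{:}{\rm SL}_{2}(3))$ entry of Table \ref{tab:main}, and of infinitely many examples with $1\leqs{\rm ifix}(T)\leqs n^{1/2}$ (Remark \ref{r:aff2}), so the exponent $4/9$ is genuinely tight there and the verification requires the exact value of ${\rm ifix}(T)$ computed in \eqref{e:eq1}.
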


{\small 
\begin{table}
\begin{center}
\[
\begin{array}{llllll}\hline
T & H_0 & {\rm ifix}(T) &  n & \a & \mbox{Conditions} \\ \hline
A_5 & S_3 & 2 & 10 & 0.301 & \\
& D_{10} & 2 & 6 & 0.386 & \\
A_6 & 3^2{:}4 & 2 & 10 & 0.301 & \\
& A_5 & 2 & 6 & 0.386 & \mbox{$G = A_6$ or $S_6$} \\
& D_{10} & 4 & 36 & 0.386 & \mbox{$G = {\rm M}_{10}$, ${\rm PGL}_{2}(9)$ or $A_6.2^2$} \\
& S_4 & 3 & 15 & 0.405 & \mbox{$G = A_6$ or $S_6$} \\
& D_8 & 5 & 45 & 0.422 & \mbox{$G = {\rm M}_{10}$, ${\rm PGL}_{2}(9)$ or $A_6.2^2$} \\
A_7 & {\rm L}_{2}(7) & 3 & 15 & 0.405 & G = A_7 \\
A_9 & 3^2{:}{\rm SL}_{2}(3) & 8 & 840 & 0.308 & \\
& {\rm L}_{2}(8){:}3 & 8 & 120 & 0.434 & G = A_9  \\
A_{10} & {\rm M}_{10} & 24 & 2520 & 0.405 & \\
A_{11} & {\rm M}_{11} & 24 & 2520 & 0.405 & G = A_{11} \\
{\rm J}_{1} & 2^3{:}7{:}3 & 5 & 1045 & 0.231 & \\
& 11{:}10 & 12 & 1596 & 0.336 & \\
& 7{:}6 & 20 & 4180 & 0.359 & \\
& 19{:}6 & 20 & 1540 & 0.408 & \\
& {\rm L}_{2}(11) &  10 & 266 & 0.412 & \\ 
{\rm J}_{2} & A_5 & 60 & 10080 & 0.444 & \\
{\rm J}_{3} & 2^4{:}(3 \times A_5) & 50 & 17442 & 0.400 & \\
& 2^{2+4}{:}(3 \times S_3) & 85 & 43605 & 0.415 & \\
& 3^2.3^{1+2}{:}8 & 80 & 25840 & 0.431 & \\
{\rm McL} & 3^{1+4}{:}2S_5 & 56 & 15400 & 0.417 &  \\
{\rm He} & 7^2{:}2.{\rm L}_{2}(7) & 64 & 244800 & 0.335 & \\
{\rm O'N} & 3^4{:}2^{1+4}D_{10} & 1064 & 17778376 & 0.417 & \\
{\rm Co}_{1} & 5^2{:}2A_5 & 3244032 & 1385925602181120 & 0.430 & \\
{\rm HN} & {\rm U}_{3}(8){:}3 & 800 & 16500000 & 0.402 & \\
{\rm Th} & 2^5.{\rm L}_{5}(2) & 2169 & 283599225 & 0.394 & \\
& 7^2{:}(3 \times 2S_4) & 645120 & 12860819712000 & 0.443 & \\ \hline
\end{array}
\]
\caption{The cases with $n^{\a} \leqs {\rm ifix}(T) \leqs n^{4/9}$ in Theorem \ref{t:main}(iv)}
\label{tab:main}
\end{center}
\end{table}}

\begin{remk}\label{r:main}
Let us make some comments on the statement of Theorem \ref{t:main}.
\begin{itemize}\addtolength{\itemsep}{0.2\baselineskip}
\item[{\rm (a)}] The groups arising in part (ii) with $|H_0|$ odd are determined in \cite[Theorem 2]{LSa} (also see \cite[Lemma 2.1]{LSh}). The possibilities are as follows:
\[
\begin{array}{lll} \hline
T & H_0 & \mbox{Conditions} \\ \hline
A_p & {\rm AGL}_{1}(p) \cap T & \mbox{$p$ prime, $p \equiv 3 \imod{4}$} \\
& & \mbox{$G = S_p$ if $p=7,11,23$} \\
\mbox{${\rm J}_{3}$, ${\rm O'N}$} & \mbox{$19{:}9$, $31{:}15$ (resp.)} &  G = T.2 \\
\mbox{${\rm M}_{23}$, ${\rm Th}$, $\mathbb{B}$} & \mbox{$23{:}11$, $31{:}15$, $47{:}23$ (resp.)} & \\ \hline
\end{array}
\]
\item[{\rm (b)}] The number $\a$ recorded in the fifth column of Table \ref{tab:main} is equal to $\log {\rm ifix}(T) / \log n$, expressed to $3$ significant figures.
\item[{\rm (c)}] The theorem reveals that there are only finitely many groups of the given form with $1 \leqs {\rm ifix}(T) \leqs  n^{4/9}$. However, it is straightforward to show that there are infinitely many with $1 \leqs {\rm ifix}(T) \leqs n^{1/2}$. For example, we can take $T = A_p$ and $H = {\rm AGL}_{1}(p) \cap G$, where $p$ is any prime with $p \equiv 1 \imod{4}$ (see Remark \ref{r:aff2}).
\item[{\rm (d)}] Theorem \ref{t:main} already has an application in finite geometry. Indeed, we refer the reader to \cite[Section 6]{BPP}, where the result is used to study point-primitive generalized quadrangles. 
\end{itemize}
\end{remk}

By combining Theorem \ref{t:main} with \cite[Theorem 1]{BThomas}, we get the following corollary.

\begin{corol}\label{c:main}
Let $G \leqs {\rm Sym}(\O)$ be an almost simple primitive permutation group of degree $n$ with socle $T$ and point stabilizer $H$. Set $H_0 = H \cap T$ and assume $|H_0|$ is even and $T$ is not isomorphic to a classical group. Then one of the following holds: 
\begin{itemize}\addtolength{\itemsep}{0.2\baselineskip}
\item[{\rm (i)}] ${\rm ifix}(T) > n^{1/3}$.
\item[{\rm (ii)}] $(T,n) = ({}^2B_2(q),q^2+1)$ and ${\rm ifix}(T) =1$. 
\item[{\rm (iii)}] $(T,H_0,{\rm ifix}(T),n)=(A_9, 3^2{:}{\rm SL}_{2}(3), 8, 840)$ or $({\rm J}_{1}, 2^3{:}7^3{:}3, 5, 1045)$.
\end{itemize}
\end{corol}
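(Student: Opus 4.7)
The plan is to derive the corollary directly from Theorem \ref{t:main} together with \cite[Theorem 1]{BThomas}. Since $T$ is not classical, the hypotheses split the argument into two regimes according to whether $T$ is an exceptional group of Lie type or $T \in \mathcal{S}$.

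First suppose $T$ is of exceptional Lie type. Then I apply \cite[Theorem 1]{BThomas}, which classifies the almost simple primitive groups of this form with ${\rm ifix}(T) \leqs n^{4/9}$. Under the assumption that $|H_0|$ is even, the only configuration satisfying this inequality is $(T,n) = ({}^2B_2(q), q^2+1)$ with ${\rm ifix}(T) = 1$, placing us in case (ii) of the corollary; every remaining exceptional case gives ${\rm ifix}(T) > n^{4/9} > n^{1/3}$, and hence case (i).

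Next suppose $T \in \mathcal{S}$ and apply Theorem \ref{t:main}. Case (ii) of that theorem is ruled out by the even-order hypothesis on $H_0$, and case (iii) is ruled out because $A_5 \cong {\rm L}_2(5)$ is classical. Case (i) of Theorem \ref{t:main} gives ${\rm ifix}(T) > n^{4/9} > n^{1/3}$, producing case (i) of the corollary. This leaves case (iv), in which $(T,H_0,{\rm ifix}(T),n)$ is a row of Table \ref{tab:main}. Before reading off, I discard the rows with $T \in \{A_5, A_6\}$, both of which are classical via $A_5 \cong {\rm L}_2(5)$ and $A_6 \cong {\rm L}_2(9)$. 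For every remaining row I compare the exponent $\alpha = \log {\rm ifix}(T)/\log n$ recorded in the table with $1/3$.

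The only genuine work is this last comparison, and it is entirely arithmetic. Inspection of the fifth column of Table \ref{tab:main} shows that precisely two non-classical rows have $\alpha < 1/3$, namely $(A_9, 3^2{:}{\rm SL}_2(3), 8, 840)$ with $\alpha \approx 0.308$ (equivalently, $8 < 840^{1/3} \approx 9.44$) and $({\rm J}_1, 2^3{:}7{:}3, 5, 1045)$ with $\alpha \approx 0.231$ (equivalently, $5 < 1045^{1/3} \approx 10.15$); these give case (iii). Every other non-classical row satisfies $\alpha \geqs 0.336 > 1/3$, the tightest check being $12 > 1596^{1/3}$ for the ${\rm J}_1$ row with $H_0 = 11{:}10$, which is immediate. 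No case escapes the trichotomy, completing the proof. There is no conceptual obstacle here: the whole argument is an assembly of two previously established classifications plus a finite arithmetic check.
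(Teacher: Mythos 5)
Your argument is correct and is essentially the paper's own proof (the paper simply asserts that the corollary follows by combining Theorem \ref{t:main} with \cite[Theorem 1]{BThomas}): both amount to invoking the Burness--Thomas classification for exceptional socles, discarding the rows of Table \ref{tab:main} with $T$ classical ($A_5$, $A_6$), and comparing the remaining exponents $\a$ with $1/3$, which isolates exactly the $(A_9, 3^2{:}{\rm SL}_{2}(3))$ and $({\rm J}_{1}, 2^3{:}7{:}3)$ cases. The only slight imprecision is your suggestion that ${}^2B_2(q)$ is the \emph{only} exceptional configuration with $|H_0|$ even and ${\rm ifix}(T)\leqs n^{4/9}$: \cite[Theorem 1]{BThomas} also records finitely many further exceptional-socle cases with $n^{1/3}<{\rm ifix}(T)\leqs n^{4/9}$, but these all fall into case (i) after the same arithmetic comparison you carry out for Table \ref{tab:main} (and note in passing that the tightest check in the table is actually the ${\rm He}$ row with $\a=0.335$, not the ${\rm J}_1$ row with $\a=0.336$).
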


The proof of Theorem \ref{t:main} is presented in Sections \ref{s:alt} and \ref{s:spor}, where we handle the groups with an alternating and sporadic socle, respectively. We freely employ computational methods, using \textsf{GAP} \cite{GAP} and {\sc Magma} \cite{magma}, when it is feasible to do so. In particular, the argument for sporadic groups in Section \ref{s:spor} makes extensive use of the character tables (and associated fusion maps) that are available in the \textsf{GAP} Character Table Library \cite{GAPCTL}. As one might expect, the O'Nan-Scott theorem provides a framework for our proof when the socle $T$ is an alternating group. Indeed, this key result divides the possibilities for the point stabilizer $H$ into several families and we proceed by considering each family in turn. 

The notation we use in this paper is fairly standard. We will write $C_n$, or just $n$, for a cyclic group of order $n$ and $G^n$ denotes the direct product of $n$ copies of $G$. An unspecified extension of $G$ by a group $H$ will be denoted by $G.H$; if the extension splits then we write $G{:}H$. We adopt the standard notation for simple groups of Lie type from \cite{KL}, which differs slightly from the notation in \cite{Atlas}. All logarithms are in the natural base, unless stated otherwise.

\section{Symmetric and alternating groups}\label{s:alt}

Let $G \leqs {\rm Sym}(\O)$ be an almost simple primitive permutation group of degree $n$ with socle $T$ and point stabilizer $H$. Set $H_0 = H \cap T$ and note that $H$ is a maximal subgroup of $G$ such that $G = HT$. Then $n = |T:H_0|$ and  
\begin{equation}\label{e:fix}
{\rm fix}(t) = \frac{|t^T \cap H_0|}{|t^T|} \cdot n
\end{equation}
for all $t \in T$, where $t^T$ denotes the conjugacy class of $t$ in $T$. We will adopt this notation for the remainder of the paper.

In this section, we prove Theorem \ref{t:main} for the groups with socle $T=A_m$. Recall that if $t \in T$ is an involution with cycle-shape $(2^k,1^{m-2k})$, then
\[
|t^T| = \frac{m!}{k!(m-2k)!2^k}.
\]
We begin by handling the groups with $m \leqs 25$.

\begin{prop}\label{p:alt}
The conclusion to Theorem \ref{t:main} holds if $m \leqs 25$. 
\end{prop}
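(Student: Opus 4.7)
The plan is to verify the proposition by a direct case-analysis over all almost simple primitive permutation groups with socle $T = A_m$ for $5 \leqs m \leqs 25$. For each such $m$, I would enumerate the $T$-conjugacy classes of maximal subgroups $H_0$ of $T$ using the O'Nan-Scott theorem (together with the explicit lists available in \cite{Atlas} and the \textsf{GAP} library \cite{GAP}), and for each $H_0$ determine which almost simple overgroups $G \leqs S_m$ give rise to a maximal subgroup $H$ with $H \cap T = H_0$. For $m=6$ one must also include the novelty maximal subgroups arising in $G \in \{{\rm M}_{10}, {\rm PGL}_{2}(9), A_{6}.2^2\}$, which are responsible for three rows of Table \ref{tab:main}.

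For each resulting pair $(T,H_0)$ I would compute ${\rm ifix}(T)$ by applying the identity \eqref{e:fix} to each $T$-class of involutions $t \in T$. Recall that such involutions have cycle-shape $(2^k,1^{m-2k})$ with $k$ even, and that $|t^T| = m!/(k!(m-2k)!2^k)$; for $m = 6$ the $S_6$-class of type $(2^2,1^2)$ splits into two $A_6$-classes of equal size, which must be treated separately. The count $|t^T \cap H_0|$ can be obtained either by direct enumeration of involutions in $H_0$ inside the ambient $A_m$ via \textsf{GAP}, or by exploiting the structure of $H_0$: for an intransitive $H_0 = (S_a \times S_{m-a}) \cap T$ the count is a convolution over how the $k$ transpositions distribute between the two orbits, and similar combinatorial formulas are available for the imprimitive wreath-type, affine and product-type subgroups. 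Having ${\rm ifix}(T)$ in hand, one compares with $n^{4/9}$ and records the tuples for which ${\rm ifix}(T) \leqs n^{4/9}$; these should reproduce exactly the cases listed in Theorem \ref{t:main}(ii)--(iv) with $T = A_m$ and $m \leqs 25$.

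The main obstacle is organisational rather than mathematical: the number of pairs $(T,H_0)$ to check, though finite, is substantial, and for each one must keep careful track of (a) the fusion of $H_0$-classes of involutions into $T$-classes, in particular when an $S_m$-class splits in $A_m$; (b) whether $H_0$ is maximal in $T$ itself or only gives rise to a novelty maximal subgroup of some $G \neq T$; and (c) the precise value of ${\rm ifix}(T)/n^{4/9}$ in borderline cases such as the ${\rm J}_{2}$-like $(T,H_0) = (A_9,{\rm L}_{2}(8){:}3)$ entry with $\a = 0.434$ (where the $A_m$ analogue also sits just below $4/9$), so that no case is wrongly classified as (i) or (iv). These issues are routine to resolve with \textsf{GAP}, and the resulting enumeration produces exactly the $A_m$-rows of Table \ref{tab:main} with $m \leqs 11$, together with the exceptional cases (ii) (contributing $T = A_p$ for $p \in \{7,11,19,23\}$ with $H_0 = {\rm AGL}_{1}(p) \cap T$) and (iii) of the statement.
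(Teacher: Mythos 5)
Your proposal is essentially the paper's own proof: the paper disposes of all $m \leqs 25$ by a direct {\sc Magma} computation, constructing representatives of the classes of maximal subgroups of $G$ via \texttt{MaximalSubgroups} in the natural degree-$m$ representation (degree $10$ for the novelty cases $G = {\rm M}_{10}$, ${\rm PGL}_{2}(9)$, $A_6.2^2$), and then evaluating ${\rm fix}(t)$ through \eqref{e:fix} for each class of involutions; your \textsf{GAP}-based enumeration organised by the O'Nan--Scott families is the same computation with more bookkeeping made explicit. One factual slip: the $S_6$-class of involutions of type $(2^2,1^2)$ does \emph{not} split in $A_6$ --- an $S_m$-class splits in $A_m$ only when the cycle type consists of distinct odd parts, which never happens for an involution --- so $A_6$ has a single class of $45$ involutions. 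This does not affect the outcome (any correct computer calculation returns the right class sizes), but if taken literally your prescription would halve $|t^T|$ and hence double ${\rm fix}(t)$ in that case, so it is worth correcting before relying on it in a hand check.
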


\begin{proof}
This is a straightforward {\sc Magma} \cite{magma} computation. First assume $G = A_m$ or $S_m$. Working in the natural permutation representation of degree $m$, we use the function \texttt{MaximalSubgroups} to construct a set of representatives of the conjugacy classes of maximal subgroups $H$ of $G$. Given an involution $t \in T$, we can then compute $|t^T \cap H_0|$ and $|t^T|$, which gives ${\rm fix}(t)$ via \eqref{e:fix}. We then obtain ${\rm ifix}(T)$ by  taking the maximum over a set of representatives of the conjugacy classes of  involutions in $T$ and the desired result quickly follows. Finally, if $T = A_6$ and $G$ is one of ${\rm PGL}_{2}(9)$, ${\rm M}_{10}$ or $A_6.2^2$ then we can proceed in an entirely similar manner, working with a permutation representation of $G$ of degree $10$.
\end{proof}

For the remainder of this section, we may assume $G = A_m$ or $S_m$ with  $m > 25$. Our aim is to establish the bound ${\rm ifix}(T)>n^{4/9}$. 

The possibilities for $H$ are described by the O'Nan-Scott theorem (see \cite{LPS}, for example), which divides the maximal subgroups of $G$ into the following families (in parts (e) and (f), $S$ denotes a non-abelian finite simple group):
\begin{itemize}\addtolength{\itemsep}{0.2\baselineskip}
\item[{\rm (a)}] \emph{Intransitive}: $H = (S_k \times S_{m-k}) \cap G$, $1 \leqs k < m/2$.
\item[{\rm (b)}] \emph{Imprimitive}: $H = (S_k \wr S_r) \cap G$, $m=kr$, $1 < k < m$.
\item[{\rm (c)}] \emph{Affine}: $H = {\rm AGL}_{d}(p) \cap G$, $m = p^d$, $p$ prime, $d \geqs 1$. 
\item[{\rm (d)}] \emph{Product-type}: $H = (S_k \wr S_r) \cap G$, $m=k^r$, $k \geqs 5$, $r \geqs 2$.
\item[{\rm (e)}] \emph{Diagonal-type}: $H = (S^k.({\rm Out}(S) \times S_k)) \cap G$, $m = |S|^{k-1}$, $k \geqs 2$. 
\item[{\rm (f)}] \emph{Almost simple}: $S \leqs H \leqs {\rm Aut}(S)$, $m = |H:K|$ for some maximal subgroup $K$ of $H$.
\end{itemize}

We will consider each family of subgroups in turn. Before we begin the analysis of case (a), let us record some useful preliminary lemmas.

\begin{lem}\label{l:basic}
Suppose $|H_0|$ is even, $|H_0| \leqs |T|^{\a}$ and $|t^T| \leqs |T|^{\b}$ for every involution $t \in H_0$. Then ${\rm ifix}(T) > n^{4/9}$ if $5-5\a-9\b>0$.
\end{lem}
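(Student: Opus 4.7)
The plan is to apply \eqref{e:fix} to a single involution inside $H_0$ and then chase the exponents. First, since $|H_0|$ is even, Cauchy's theorem furnishes an involution $t \in H_0$. For this $t$ we trivially have $t^T \cap H_0 \supseteq \{t\}$, so $|t^T \cap H_0| \geq 1$, and the hypothesis $|t^T| \leqs |T|^\b$ gives
\[
{\rm ifix}(T) \geqs {\rm fix}(t) = \frac{|t^T \cap H_0|}{|t^T|} \cdot n \geqs \frac{n}{|T|^\b}.
\]

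Next I would translate the hypothesis $|H_0| \leqs |T|^\a$ into a lower bound on $n$ in terms of $|T|$. Since $n = |T:H_0| = |T|/|H_0|$, this immediately yields $n \geqs |T|^{1-\a}$, and raising to the $5/9$-th power gives $n^{5/9} \geqs |T|^{(5-5\a)/9}$.

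Combining the two inequalities produces
\[
\frac{{\rm ifix}(T)}{n^{4/9}} \geqs \frac{n^{5/9}}{|T|^\b} \geqs |T|^{(5-5\a-9\b)/9},
\]
which is strictly greater than $1$ as soon as $5 - 5\a - 9\b > 0$, because $|T| > 1$. So the conclusion ${\rm ifix}(T) > n^{4/9}$ is immediate.

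There is no genuine obstacle here: the proof is just a one-line exponent chase, using only Cauchy's theorem, the basic identity \eqref{e:fix}, and the trivial observation that $t$ itself lies in $t^T \cap H_0$. The real content of the lemma is its role as a convenient numerical criterion to be invoked in each of the O'Nan--Scott cases (a)--(f) that follow, where $\a$ will be read off from an upper bound on $|H_0|$ (in terms of $m$, then converted via $|T| = \frac{1}{2}m!$) and $\b$ from the cycle-shape formula for $|t^T|$ recorded just before the lemma.
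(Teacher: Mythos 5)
Your argument is correct and is essentially identical to the paper's own proof: take any involution $t \in H_0$ (which exists by Cauchy), use $|t^T \cap H_0| \geqs 1$ together with \eqref{e:fix} to get ${\rm fix}(t) \geqs n|T|^{-\b}$, and then combine with $n = |T:H_0| \geqs |T|^{1-\a}$ to chase the exponents. No differences worth noting.
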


\begin{proof}
Let $t \in H_0$ be an involution. Then $|t^T \cap H_0| \geqs 1$ and $|t^T| \leqs |T|^{\b}$, so ${\rm fix}(t) \geqs n|T|^{-\b}$ and thus ${\rm ifix}(T) > n^{4/9}$ if $n>|T|^{9\b/5}$. The result now follows since $n = |T:H_0| \geqs |T|^{1-\a}$.
\end{proof}

\begin{lem}\label{l:inv}
If $T = A_m$ and $m > 20$, then $|t^T| < |T|^{11/20}$ for every involution $t \in T$.
\end{lem}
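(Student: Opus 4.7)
I would begin by rewriting the inequality in centralizer form: since $|t^T|\cdot|C_T(t)| = |T|$, the bound $|t^T| < |T|^{11/20}$ is equivalent to $|C_T(t)| > |T|^{9/20}$. The centralizer of an involution $t$ of cycle-shape $(2^k, 1^{m-2k})$ in $S_m$ is $(S_2 \wr S_k) \times S_{m-2k}$, which always contains an odd permutation (for instance the transposition inside one of the $2$-cycles of $t$), so the $S_m$-class of $t$ does not split in $A_m$ and $|C_T(t)| = 2^{k-1} k!(m-2k)!$. It therefore suffices to prove
\[
2^{k-1}\, k!(m-2k)! > (m!/2)^{9/20}
\]
for every $m > 20$ and every even $k$ with $2 \leqs k \leqs m/2$.

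My approach is to apply Stirling's inequality in its sharp form $\sqrt{2\pi n}\,(n/e)^n \leqs n! \leqs \sqrt{2\pi n}\,(n/e)^n\, e^{1/(12n)}$ to each of the four factorials appearing in the inequality. Setting $N(k) = 2^{k-1}k!(m-2k)!$, one computes
\[
\frac{N(k+1)}{N(k)} = \frac{2(k+1)}{(m-2k)(m-2k-1)},
\]
which equals $1$ precisely when $m-2k \approx \sqrt{m}$. Hence $N$ attains its minimum near $k^\ast \approx (m-\sqrt{m})/2$, and away from this value $N$ grows geometrically in $|k - k^\ast|$, making the bound straightforward to verify. The substantive work is at $k = k^\ast$: there the leading $m\ln m$ contributions on the two sides partially cancel to a coefficient of $1/20$, and the Stirling-expanded form of the inequality reduces to a residual of shape $(m/20)\ln m - \sqrt{m} + O(\ln m) > 0$, which is comfortably positive once $m$ exceeds an explicit absolute constant.

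The principal obstacle is that the exponent $11/20$ is close to optimal at the threshold: a direct numerical check shows that at $m = 21$ the largest involution class in $A_{21}$ (realised at $k = 8$) has size $\approx 0.87 \cdot |T|^{11/20}$, so very little slack is available near $k^\ast$. I would therefore retain the $e^{1/(12n)}$ error terms explicitly in the Stirling analysis rather than discard them, and for the handful of small cases not cleanly covered by the asymptotic (say $20 < m \leqs 30$) simply verify the inequality by direct enumeration over the $O(m)$ admissible cycle types. Once those finitely many values of $m$ are checked, the Stirling estimate disposes of all larger $m$ uniformly.
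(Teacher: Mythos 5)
Your argument is correct in outline, but it takes a genuinely different route from the paper. The paper does not work with individual cycle types at all: it bounds the size of \emph{any} involution class by the total number of involutions in $S_m$, invokes the classical inequality $|\mathcal{I}(S_m)|^2 < k(S_m)\cdot|S_m| = m!\,p(m)$, and then uses Pribitkin's explicit bound $p(m) < m^{-3/4}e^{\pi\sqrt{2m/3}}$ on the partition function to show $2^{11}p(m)^{10} < m!$ for $m \geq 55$, with {\sc Magma} handling $21 \leq m \leq 54$. Your approach is more elementary and self-contained: passing to centralizers, identifying $C_{A_m}(t)$ correctly as $2^{k-1}k!(m-2k)!$ (the class indeed does not split), locating the extremal cycle type at $k^\ast \approx (m-\sqrt{m})/2$ via the ratio $N(k+1)/N(k) = 2(k+1)/((m-2k)(m-2k-1))$, and checking the $1/20$ margin there by Stirling. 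What the paper's method buys is a very short analytic verification with no case analysis over $k$ and a clean monotonicity argument; what yours buys is sharper information (you see exactly which class is extremal and how close to $|T|^{11/20}$ it gets, e.g.\ your $\approx 0.87$ at $m=21$), which the counting bound obscures. Two cautions on execution: the residual at $k^\ast$ is $\tfrac{m}{20}\log m - \tfrac{m}{20} + O(\sqrt{m})$ rather than $\tfrac{m}{20}\log m - \sqrt{m} + O(\log m)$, and because the $O(\sqrt{m})$ term carries a nontrivial constant the asymptotic will likely not take over as early as $m = 30$; be prepared to extend the direct enumeration over cycle types to a larger finite range (the paper itself computes up to $m = 54$), which is cheap since there are only $O(m)$ classes to check for each $m$. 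Neither of these affects the soundness of the strategy.
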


\begin{proof}
The groups with $m \leqs 54$ can be checked using {\sc Magma}, so let us assume $m \geqs 55$. Recall that if $G$ is a finite group and $\mathcal{I}(G)$ is the set of involutions in $G$, then $|\mathcal{I}(G)|^2 < k(G)\cdot |G|$, where $k(G)$ is the number of conjugacy classes of $G$ (see \cite[Chapter 4]{Isaacs}, for example). As a special case, we deduce that
\[
|\mathcal{I}(S_m)|^2 < m!p(m),
\]
where $p(m)$ is the partition function, and thus it suffices to show that
\begin{equation}\label{e:part}
2^{11}p(m)^{10} < m!.
\end{equation}
Indeed, if this inequality holds then $|\mathcal{I}(S_m)| < |T|^{11/20}$ and the  desired bound follows. 

By the main theorem of \cite{Prib} we have $p(m) < m^{-3/4}e^{c\sqrt{m}}$, where $c = \pi\sqrt{2/3}$, so \eqref{e:part} holds if $f(m)>1$, where
\[
f(m):=\frac{m^{15/2}m!}{2^{11}e^{10c\sqrt{m}}}.
\]
For $m \geqs 55$ we have 
\[
\frac{f(m+1)}{f(m)} = \frac{\left(1+\frac{1}{m}\right)^{15/2}(m+1)}{e^{10c(\sqrt{m+1}-\sqrt{m})}} \geqs \frac{m}{6} > 1,
\]
so $f$ is an increasing function and the result follows since $f(55)>1$. 
\end{proof}

\begin{lem}\label{l:cor}
Let $T = A_m$ with $m>20$. If $|H_0|$ is even and $|H_0|^{100}<|T|$, then ${\rm ifix}(T)>n^{4/9}$.
\end{lem}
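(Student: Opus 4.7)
The plan is to proceed exactly along the lines of Lemma~\ref{l:basic}, taking $\a = 1/100$ (supplied by the hypothesis $|H_0|^{100} < |T|$) and $\b = 11/20$ (supplied by Lemma~\ref{l:inv}). With these values one computes
\[
5 - 5\a - 9\b \;=\; 5 - \tfrac{1}{20} - \tfrac{99}{20} \;=\; 0,
\]
so Lemma~\ref{l:basic} does not apply verbatim as a black box. However, both input inequalities are strict, and this slack is exactly enough to push the bound across the threshold.

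The first step is to use that $|H_0|$ is even to pick an involution $t \in H_0$. Lemma~\ref{l:inv} (applicable since $m > 20$) then gives $|t^T| < |T|^{11/20}$ strictly, and combining this with $|t^T \cap H_0| \geqs 1$ and \eqref{e:fix} yields
\[
{\rm fix}(t) \;=\; \frac{|t^T \cap H_0|}{|t^T|}\cdot n \;>\; \frac{n}{|T|^{11/20}}.
\]

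The second step is to convert the hypothesis into a lower bound on $n$: from $|H_0| < |T|^{1/100}$ one gets $n = |T|/|H_0| > |T|^{99/100}$. Exploiting the identity $\tfrac{99}{100}\cdot\tfrac{5}{9} = \tfrac{11}{20}$, raising this inequality to the power $5/9$ gives $n^{5/9} > |T|^{11/20}$, i.e.\ $n/|T|^{11/20} > n^{4/9}$. Chaining this with the previous display yields ${\rm fix}(t) > n^{4/9}$, and hence ${\rm ifix}(T) > n^{4/9}$.

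The only real subtlety is the borderline identity $5 - 5\a - 9\b = 0$, which is presumably the reason the exponent $100$ is the right choice in the hypothesis; beyond careful tracking of the two strict inequalities, no new ideas are needed.
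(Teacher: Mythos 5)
Your proof is correct and follows essentially the same route as the paper, whose entire proof of this lemma is the single sentence ``This follows by combining Lemmas \ref{l:basic} and \ref{l:inv}.'' Your observation that the naive black-box combination lands exactly on the boundary $5-5\a-9\b=0$ is a fair (if minor) point that the paper glosses over; one could instead take $\a=\log|H_0|/\log|T|<1/100$ and $\b=\max_t\log|t^T|/\log|T|<11/20$, for which the hypotheses of Lemma \ref{l:basic} hold verbatim with $5-5\a-9\b>0$, but your direct chaining of the two strict inequalities achieves the same thing.
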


\begin{proof}
This follows by combining Lemmas \ref{l:basic} and \ref{l:inv}. 
\end{proof}

\subsection{Intransitive subgroups}\label{ss:intran}

In this section we will assume $H = (S_k \times S_{m-k}) \cap G$ is a maximal intransitive subgroup of $G$, where $1 \leqs k < m/2$. We may identify $\O$ with the set of $k$-element subsets of $\{1, \ldots, m\}$. In particular, $n = \binom{m}{k}$.

\begin{prop}\label{p:intrans}
If $m \geqs 7$, then ${\rm ifix}(T) > n^{1/2}$.
\end{prop}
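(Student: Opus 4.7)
The plan is to exhibit the explicit involution $t = (1,2)(3,4) \in A_m$, which belongs to $T$ for every $m \geq 4$, and bound its number of fixed $k$-subsets from below by a constant fraction of $\binom{m}{k}$. Since a $k$-subset of $\{1,\ldots,m\}$ is fixed by $t$ precisely when it is a union of the $t$-orbits $\{1,2\}, \{3,4\}, \{5\}, \ldots, \{m\}$, summing over the number of $2$-cycles included gives
\[
{\rm fix}(t) \;=\; \binom{m-4}{k} + 2\binom{m-4}{k-2} + \binom{m-4}{k-4}
\]
(with the usual convention that $\binom{a}{b} = 0$ when $b < 0$ or $b > a$).

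The key step will be to show ${\rm fix}(t) \geq \binom{m}{k}/5$ for all $m \geq 7$ and $1 \leq k < m/2$. Using the identity $(1+x^2)^2 = (1+x)^4 - 4x(1+x)^2 + 4x^2$ and extracting $[x^k]$ from $(1+x)^{m-4}(1+x^2)^2$, the above can be rewritten as ${\rm fix}(t) = \binom{m}{k} - 4\binom{m-2}{k-1} + 4\binom{m-4}{k-2}$. Dividing by $\binom{m}{k}$ and setting $R = k(m-k)/(m(m-1))$ and $R' = k(k-1)(m-k)(m-k-1)/(m(m-1)(m-2)(m-3))$ produces ${\rm fix}(t)/\binom{m}{k} = 1 - 4R + 4R'$. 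I would first show by a direct calculation that this ratio is non-increasing in $k$ on $1 \leq k \leq \lfloor(m-1)/2\rfloor$, so it is enough to verify $1 - 4R + 4R' \geq 1/5$ at the right endpoint $k = \lfloor(m-1)/2\rfloor$. Clearing denominators there reduces the bound to an elementary polynomial inequality in $m$: explicitly, $(m-5)(m-7) \geq 0$ when $m$ is odd, and $(m-6)(m^2 - 8m + 40) \geq 0$ when $m$ is even, both of which evidently hold for $m \geq 7$. The former is tight precisely at $m = 7$, so the bound $1/5$ is attained at $(m,k) = (7,3)$.

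Granted ${\rm fix}(t) \geq \binom{m}{k}/5$, the conclusion ${\rm fix}(t) > \binom{m}{k}^{1/2}$ follows as soon as $\binom{m}{k} > 25$. The only remaining cases are $k = 1$ (where $\binom{m}{1} = m$ can be at most $25$) and the pair $(m,k) = (7,2)$. For $k = 1$, one has ${\rm fix}(t) = m - 4$ and $(m-4)^2 > m$ reduces to $m^2 - 9m + 16 > 0$, which equals $2$ at $m = 7$ and is strictly increasing for $m \geq 5$; the pair $(7,2)$ is handled by the direct computation ${\rm fix}(t) = 5 > \sqrt{21}$.

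The main obstacle is the monotonicity claim for $1 - 4R + 4R'$ in $k$: the term $-4R$ is decreasing while $4R'$ is increasing, so their net behaviour needs to be compared carefully. Working out $g(k+1) - g(k)$ explicitly shows that its sign is controlled by the bracket $-4 + 8k(m-k-1)/((m-2)(m-3))$, so everything reduces to verifying $2k(m-k-1) \leq (m-2)(m-3)$ for $k \leq (m-1)/2$; this in turn follows from AM-GM ($k(m-k-1) \leq (m-1)^2/4$) combined with the elementary inequality $(m-1)^2 \leq 2(m-2)(m-3)$, valid for $m \geq 7$. Once the monotonicity is in hand, the rest of the argument is routine polynomial bookkeeping.
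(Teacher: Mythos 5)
Your proof is correct, and while it starts from the same witness involution $t = (1,2)(3,4)$ and the same count ${\rm fix}(t) = \binom{m-4}{k} + 2\binom{m-4}{k-2} + \binom{m-4}{k-4}$ as the paper, the quantitative reduction is genuinely different. The paper drops the last term, crudely bounds the ratios $\binom{m-4}{k}/\binom{m}{k}$ and $\binom{m-4}{k-2}/\binom{m}{k}$ from below using $k \leqs \frac{1}{2}(m-1)$ and $\binom{m}{k} \geqs \binom{m}{2}$, and reduces to showing a single-variable function $h(m)$ is increasing with $h(10)>1$; it targets the bound $n^{1/2}$ directly and proves nothing stronger. You instead prove the exact identity ${\rm fix}(t)/\binom{m}{k} = 1 - 4R + 4R'$ via $(1+x^2)^2 = (1+x)^4 - 4x(1+x)^2 + 4x^2$, show this ratio is non-increasing in $k$ (your bracket computation and the inequality $2k(m-k-1) \leqs (m-2)(m-3)$ for $m \geqs 7$ both check out), and evaluate exactly at the endpoint, obtaining the constant fixed-point-ratio bound ${\rm fix}(t) \geqs \binom{m}{k}/5$ with equality at $(m,k)=(7,3)$. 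This is a strictly stronger conclusion — ${\rm ifix}(T) \geqs n/5$, far beyond $n^{1/2}$ — and the factorizations $(m-5)(m-7) \geqs 0$ and $(m-6)(m^2-8m+40) \geqs 0$ and your handling of the residual cases $k=1$ and $(m,k)=(7,2)$ are all correct. The trade-off is that your argument requires the two-variable monotonicity analysis in $k$, whereas the paper's is a more ad hoc but self-contained one-variable estimate; for the purposes of this paper either suffices, but your version yields a cleaner structural statement about the fixed point ratio.
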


\begin{proof}
We claim that ${\rm fix}(t)>n^{1/2}$, where $t = (1,2)(3,4) \in T$. If $k = 1$ then $n=m$, ${\rm fix}(t) = m-4$ and the result follows. Now assume $k \geqs 2$. Clearly, $t$ fixes a $k$-set $\Gamma$ if and only if $\Gamma \cap \{1,2,3,4\}$ is either empty, or one of $\{1,2\}$, $\{3,4\}$ or $\{1,2,3,4\}$. Therefore,
\[
{\rm fix}(t) = \binom{m-4}{k} + 2\binom{m-4}{k-2} + \binom{m-4}{k-4}
\]
where the final term is $0$ if $k=2$ or $3$. The cases with $m<10$ can be checked directly, so let us assume $m \geqs 10$. We claim that
\begin{equation}\label{e:bd1}
\binom{m-4}{k} + 2\binom{m-4}{k-2} > \binom{m}{k}^{\frac{1}{2}},
\end{equation}
which implies that ${\rm fix}(t) > n^{1/2}$. 

To see this, we first express the binomial coefficients $\binom{m-4}{k}$ and $\binom{m-4}{k-2}$ in terms of $\binom{m}{k}$ and we deduce that it suffices to show that 
\[
\binom{m}{k}^{\frac{1}{2}}\left(\frac{f(k)g(k)}{m(m-1)(m-2)(m-3)}\right)>1,
\]
where $f(k) = (m-k)(m-k-1)$ and $g(k) = 2k(k-1)+(m-k-2)(m-k-3)$. Since $k \leqs \frac{1}{2}(m-1)$, we calculate that $f(k) \geqs \frac{1}{4}(m^2-1)$ and $g(k) \geqs \frac{2}{3}m^2-4m+\frac{21}{4}$. In addition, we have $\binom{m}{k} \geqs \binom{m}{2}$ and thus \eqref{e:bd1} holds if $h(m)>1$, where
\[
h(m) := \frac{\binom{m}{2}^{\frac{1}{2}}(m+1)\left(\frac{2}{3}m^2-4m+\frac{21}{4}\right)}{4m(m-2)(m-3)}.
\]
Now
\[
\frac{h(m+1)}{h(m)} = \left(\frac{m+1}{m-1}\right)^{1/2}\cdot \frac{h_1(m)}{h_2(m)}
\]
with
\[
h_1(m) = m(m+2)(m-3)\left(\frac{2}{3}m^2-\frac{8}{3}m+\frac{23}{12}\right) = h_2(m)+\frac{11}{2}m^2-\frac{41}{4}m+\frac{21}{4} > h_2(m),
\]
so $h(m)$ is an increasing function and the result follows since $h(10)>1$.
\end{proof}

\subsection{Imprimitive subgroups}\label{ss:imprim}

Next we turn to the imprimitive subgroups of the form $H = (S_k \wr S_r) \cap G$, where $m=kr$ and $1<k<m$. We identify $\O$ with the set of partitions of $\{1, \ldots, m\}$ into $r$ subsets of size $k$. Note that
\[
n = |\O| = \frac{(kr)!}{k!^rr!} =: f(k,r).
\]

\begin{prop}\label{p:imprim}
If $m \geqs 9$, then ${\rm ifix}(T) > n^{1/2}$.
\end{prop}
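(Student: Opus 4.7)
The plan is to adapt the strategy of Proposition \ref{p:intrans}: fix the explicit involution $t = (1,2)(3,4) \in T$ and bound ${\rm fix}(t)$ from below by counting the partitions $\mathcal{P} \in \O$ that are $t$-invariant. The key point is that $t$ moves only the four points $\{1,2,3,4\}$, so the action of $t$ on the blocks of any $\mathcal{P}$ is almost completely determined by how $\{1,2,3,4\}$ is distributed among the blocks.

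A short case analysis will show that $\mathcal{P}$ is $t$-invariant iff one of three mutually exclusive configurations holds: (A1) $\{1,2,3,4\}$ lies in a single block (only possible when $k \geqs 4$); (A2) $\{1,2\}$ and $\{3,4\}$ each lies in its own block, both fixed setwise by $t$; or (B) $k=2$ and $\mathcal{P}$ contains one of the block-pairs $\{\{1,3\},\{2,4\}\}$ or $\{\{1,4\},\{2,3\}\}$, which $t$ swaps. Indeed, if $t$ swaps two distinct blocks $B$ and $B'$ with $1 \in B$, $2 \in B'$, every element of $B \setminus \{1\}$ must be moved by $t$ and hence lie in $\{3,4\}$, forcing $k \leqs 2$; the case $B = B'$ quickly yields configurations (A1) and (A2). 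Elementary counting then gives
\[
{\rm fix}(t) = \binom{m-4}{k-4}f(k,r-1) + \binom{m-4}{k-2}\binom{m-k-2}{k-2}f(k,r-2) + 2\varepsilon\, f(k,r-2),
\]
where $\varepsilon = 1$ if $k=2$ and $0$ otherwise, and the first term vanishes for $k \leqs 3$.

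Using the ratios $f(k,r-1)/n = r/\binom{m}{k}$ and $f(k,r-2)/n = r(r-1)/\bigl(\binom{m}{k}\binom{m-k}{k}\bigr)$, each summand rewrites as $n$ times an explicit rational function of $m$ and $k$, and the desired inequality ${\rm fix}(t) > n^{1/2}$ reduces to a polynomial lower bound on $n$. The main obstacle is verifying this uniformly across all admissible pairs $(k,r)$ with $kr = m$ and $1 < k < m$. The two extremes merit special attention: for $k=2$ one has ${\rm fix}(t) = 3f(2,r-2) = 3n/((m-1)(m-3))$, so the inequality reduces to $n > (m-1)^2(m-3)^2/9$, which holds easily since $n = (m-1)!!$ grows super-polynomially; for $r=2$ (so $k=m/2$) the dominant term $\binom{m-4}{k-2}$ is a near-central binomial coefficient of order $2^{m-4}/\sqrt{m}$, which comfortably exceeds $\sqrt{n} \sim 2^{m/2}/m^{1/4}$. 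The intermediate regime should succumb to a monotonicity argument in $m$ of the same flavour as in Proposition \ref{p:intrans}, with any small values of $m$ cleared by direct computation in {\sc Magma}.
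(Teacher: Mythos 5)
Your combinatorial setup is exactly the paper's: the same involution $t=(1,2)(3,4)$, the same classification of the $t$-invariant partitions into the three configurations, and the same count (the paper splits into $k=2$, where ${\rm fix}(t)=3f(2,r-2)$, and $k\geqs 3$, where the one-block and two-block terms appear). One small slip in your justification of configuration (B): from the fact that every element of $B\setminus\{1\}$ is moved by $t$ you only get $B\subseteq\{1,3,4\}$ and hence $k\leqs 3$; to exclude $k=3$ you must also invoke the disjointness of $B$ and $t(B)$ (if $B=\{1,3,4\}$ then $t(B)=\{2,3,4\}$ meets $B$). That is easily repaired.

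The genuine gap is the quantitative endgame. You verify the two extremes $k=2$ and $r=2$ and assert that the intermediate regime ``should succumb to a monotonicity argument in $m$''. Monotonicity in $m$ is not a workable organizing principle here: for a fixed $m$ there are several admissible factorizations $m=kr$, and as $m$ grows the set of pairs $(k,r)$ does not form a chain along which the target inequality can be propagated. The paper instead fixes $k$ and shows that the quantity
\[
g(k,r)=\binom{kr-4}{k-2}^2\binom{kr-k-2}{k-2}^2\frac{f(k,r-2)^2}{f(k,r)},
\]
i.e.\ the square of the dominant term of ${\rm fix}(t)$ divided by $n$, is increasing in $r$; this is the nontrivial step, requiring the estimates $\binom{kr+k-4}{k}\geqs (r-\tfrac13)^k$ and a lower bound on the ratio $(m-1)(m-2)(m-3)/\bigl((m+k-1)(m+k-2)(m+k-3)\bigr)$, and it reduces everything to the base cases $r=2$ (for $k\geqs 4$) and $r=3$ (for $k=3$), which are then disposed of by a further monotonicity argument in $k$. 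Until you supply a uniform argument of this kind covering all pairs with $3\leqs k$ and $r\geqs 3$ both unbounded, the proof is incomplete.
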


\begin{proof}
We claim that ${\rm fix}(t) > n^{1/2}$ for $t = (1,2)(3,4) \in T$. 

First assume $k=2$, so $r \geqs 5$. Clearly, $t$ stabilizes a partition in $\O$ if and only if the partition contains $\{1,2\}$ and $\{3,4\}$, or $\{1,3\}$ and $\{2,4\}$, or $\{1,4\}$ and $\{2,3\}$. Therefore, ${\rm fix}(t) = 3f(2,r-2)$ and it suffices to show that $g(r)>1$, where
\[
g(r):=\frac{9f(2,r-2)^2}{f(2,r)}.
\]
Now 
\[
\frac{g(r+1)}{g(r)} = \frac{(2r-3)^2}{2r+1}>1,
\]
so $g(r)$ is an increasing function and the result follows since $g(5)>1$.

Now assume $k \geqs 3$. A partition in $\O$ is fixed by $t$ if and only if it has a part containing $\{1,2\}$ and another containing $\{3,4\}$, or $k \geqs 4$ and it has a part containing $\{1,2,3,4\}$. Therefore,
\[
{\rm fix}(t) = \binom{m-4}{k-2}\binom{m-k-2}{k-2}f(k,r-2) + \binom{m-4}{k-4}f(k,r-1)
\]
and it suffices to show that 
\[
g(k,r):= \binom{kr-4}{k-2}^2\binom{kr-k-2}{k-2}^2\frac{f(k,r-2)^2}{f(k,r)} >1.
\]

We claim that if $k$ is fixed then $g(k,r)$ is increasing as a function of $r$. To see this, first observe that
\[
\frac{g(k,r+1)}{g(k,r)} = \frac{r}{(r-1)^2}\binom{kr+k-4}{k}\frac{(m-1)(m-2)(m-3)}{(m+k-1)(m+k-2)(m+k-3)}
\]
and we have the bounds
\[
\binom{kr+k-4}{k} \geqs \left(r+1-\frac{4}{k}\right)^k \geqs \left(r - \frac{1}{3}\right)^k
\]
and
\[
\frac{(m-1)(m-2)(m-3)}{(m+k-1)(m+k-2)(m+k-3)} \geqs \left(\frac{m-3}{m+k-3}\right)^3 \geqs \left(\frac{k}{6}+1\right)^{-3}
\]
since $k \geqs 3$ and $m \geqs 9$. It is routine to check that 
\[
\left(r - \frac{1}{3}\right)^k \geqs (r-1)\left(\frac{k}{6}+1\right)^{3}
\]
and this justifies the claim. 

Therefore, for $k \geqs 4$ we have 
\[
g(k,r) \geqs g(k,2) = \binom{2k-4}{k-2}^2\frac{1}{f(k,2)}
\]
and
\[
\frac{g(k+1,2)}{g(k,2)} = \frac{2(2k-3)^2(k+1)}{(k-1)^2(2k+1)} > 1,
\]
so $g(k,r) \geqs g(4,2) > 1$. Similarly, if $k=3$ then $r \geqs 3$ and $g(3,r) \geqs g(3,3)>1$. The result follows.
\end{proof}

\subsection{Affine subgroups}\label{ss:affine}

In this section we assume $H = {\rm AGL}_{d}(p) \cap G$ and $m = p^d$, where $p$ is a prime and $d \geqs 1$. Note that 
\[
n = |\O| \geqs \frac{|T|}{|{\rm AGL}_{d}(p)|} = \frac{(p^d-1)!}{2|{\rm GL}_{d}(p)|}.
\]

Write ${\rm AGL}_{d}(p) = V{:}L$, where $V = (\mathbb{F}_p)^d$ and $L = {\rm GL}(V)$. Now ${\rm AGL}_{d}(p)$ acts faithfully on $V$ by affine transformations $(v,x): u \mapsto v+u^x$ and this embeds ${\rm AGL}_{d}(p)$ in $S_m$. Note that if $t = (v,x) \in {\rm AGL}_{d}(p)$ then $t^2=1$ if and only if $v^x = -v$ and $x^2=1$. 

\begin{defn}\label{d:tk}
Fix a basis $\{e_1, \ldots, e_d\}$ for $V$. With respect to this basis, let us  define  $x_k = [-I_k,I_{d-k}]$ if $p \ne 2$ and $x_k = [A^k,I_{d-2k}]$ if $p=2$, where $A = \left(\begin{smallmatrix} 0 & 1 \\ 1 & 0 \end{smallmatrix}\right)$. In particular, if $p=2$ then $1 \leqs k \leqs d/2$ and $x_k$ is a block-diagonal matrix with $k$ blocks equal to $A$. For $k \geqs 1$ set $t_k = (v,x_k) \in {\rm AGL}_{d}(p)$, where $v=e_1$ if $p \ne 2$, otherwise $v = 0$. Note that $t$ is an involution.
\end{defn}

\begin{lem}\label{l:aff}
Let $t = t_k \in {\rm AGL}_{d}(p)$. Then $t$ has cycle-shape $(2^{p^{d-k}(p^k-1)/2},1^{p^{d-k}})$ as an element of $S_m$ and we have 
$|C_{{\rm AGL}_{d}(p)}(t)| = p^{d-k}|C_{{\rm GL}_{d}(p)}(x_k)|$.
\end{lem}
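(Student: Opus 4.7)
The plan is to analyze the action of $t = (v, x_k)$ on $V = \mathbb{F}_p^d$ directly. First I would confirm that $t$ is an involution: applying $t$ twice sends $u$ to $v + v^{x_k} + u^{x_k^2}$, and in each case $x_k^2 = 1$ (since $(-I_k)^2 = I_k$ and $A^2 = I_2$), while $v + v^{x_k} = 0$ (because $v^{x_k} = -v$ when $p$ is odd, and $v = 0$ when $p = 2$).

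For the cycle shape, I would count the fixed points of $t$ on $V$, namely the solutions of $u(1 - x_k) = v$. When $p$ is odd, $1 - x_k = [2I_k, 0]$ has rank $k$, its kernel has order $p^{d-k}$, and its image equals $\langle e_1, \ldots, e_k \rangle$, which contains $v = e_1$; when $p = 2$, each block $1 - A = \left(\begin{smallmatrix} 1 & 1 \\ 1 & 1 \end{smallmatrix}\right)$ has rank $1$ over $\mathbb{F}_2$, so $1 - x_k$ again has kernel of order $p^{d-k}$, and since $v = 0$ the solution set is exactly this kernel. In either case $t$ has $p^{d-k}$ fixed points, and since $t$ is an involution in $S_m$ the remaining $p^{d-k}(p^k - 1)$ points split into transpositions, yielding the cycle shape $(2^{p^{d-k}(p^k-1)/2}, 1^{p^{d-k}})$.

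For the centralizer, I would unravel the commuting condition $(w, y)t = t(w, y)$ via the group law $(v, x)(w, y) = (v + w^x, yx)$ to extract the two requirements $y \in C_{{\rm GL}_d(p)}(x_k)$ and $w(x_k - 1) = v(y - 1)$. For each fixed admissible $y$, the second is an inhomogeneous linear equation in $w$ whose solution set, if nonempty, is a coset of $\ker(x_k - 1)$ of size $p^{d-k}$. Consistency is the one point requiring a small argument: since $y$ commutes with $x_k$, it preserves the image of $x_k - 1$, and $v$ itself lies in that image (trivially when $p = 2$ as $v = 0$, and via $e_1 = -\tfrac{1}{2} e_1 (x_k - 1)$ when $p$ is odd), so $v(y - 1) = v^y - v$ lies there too. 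Multiplying the $p^{d-k}$ choices of $w$ by $|C_{{\rm GL}_d(p)}(x_k)|$ then gives the claimed centralizer order. The main—indeed essentially the only—obstacle is verifying this consistency condition; everything else reduces to an elementary rank count for $1 - x_k$ over $\mathbb{F}_p$.
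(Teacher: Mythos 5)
Your proof is correct and follows essentially the same route as the paper: a direct linear-algebra analysis of the fixed points of $t$ on $V$ and of the commuting condition for $(w,y)$, just phrased via rank/kernel/image of $1-x_k$ rather than explicit coordinates. Your consistency check (that $v(y-1)$ lies in the image of $x_k-1$) correctly supplies the detail the paper dismisses as ``a straightforward calculation.''
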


\begin{proof}
First consider the cycle-shape of $t$. Since $t$ is an involution, it suffices to show that it fixes exactly $p^{d-k}$ vectors in $V$. Suppose $w = \sum_{i}a_ie_i \in V$ is fixed by $t$. 

First assume $p \ne 2$. Here $w = w^t = w^{x_k}+e_1$ and thus
\[
\sum_{i=1}^{d}a_ie_i = (-a_1+1)e_1 + \sum_{i=2}^{k}(-a_i)e_i + \sum_{i=k+1}^da_ie_i, 
\]
so $a_1 = \frac{1}{2}$ and $a_i = 0$ for $2 \leqs i \leqs k$. There are no conditions on the coefficients $a_i$ for $i > k$, so $t$ fixes precisely $p^{d-k}$ vectors and the result follows. Similarly, if $p = 2$ then $w = w^t = w^{x_k}$ and
\[
\sum_{i=1}^{d}a_ie_i = \sum_{i=1}^{k}(a_{2i}e_{2i-1}+a_{2i-1}e_{2i}) + \sum_{i=2k+1}^{d}a_ie_i,
\]
which implies that $a_{2i-1}=a_{2i}$ for $1 \leqs i \leqs k$. Therefore $t$ fixes $2^{k}2^{d-2k}=2^{d-k}$ vectors as claimed.

Now let us consider the centralizer of $t$. Suppose $p \ne 2$ and $(u,y) \in {\rm AGL}_{d}(p)$. Then $(u,y)$ centralizes $t$ if and only if $y \in C_{{\rm GL}_{d}(p)}(x_k) = {\rm GL}_{k}(p) \times {\rm GL}_{d-k}(p)$ and $u+e_1^y = e_1+u^{x_k}$. Given $y \in C_{{\rm GL}_{d}(p)}(x_k)$, a straightforward calculation shows that there are $p^{d-k}$ vectors $u \in V$ such that 
$u+e_1^y = e_1+u^{x_k}$ and thus 
\[
|C_{{\rm AGL}_{d}(p)}(t)| = p^{d-k}|C_{{\rm GL}_{d}(p)}(x_k)| = p^{d-k}|{\rm GL}_{k}(p)||{\rm GL}_{d-k}(p)|.
\]
Similarly, if $p=2$ then $(u,y) \in {\rm AGL}_{d}(2)$ centralizes $t$ if and only if $y \in C_{{\rm GL}_{d}(2)}(x_k)$ and $u^{x_k} = u$. Since the $1$-eigenspace of $x_k$ on $V$ is $(d-k)$-dimensional, we get
\[
|C_{{\rm AGL}_{d}(2)}(t)| = 2^{d-k}|C_{{\rm GL}_{d}(2)}(x_k)| = 2^{d-k+2dk-3k^2}|{\rm GL}_{k}(2)||{\rm GL}_{d-2k}(2)|
\]
and the result follows.
\end{proof}

\begin{prop}\label{p:aff1}
If $d=1$ then one of the following holds:
\begin{itemize}\addtolength{\itemsep}{0.2\baselineskip}
\item[{\rm (i)}] $p \equiv 3 \imod{4}$ and ${\rm ifix}(T) = 0$.
\item[{\rm (ii)}] $p=5$, $n=6$ and ${\rm ifix}(T) = 2$.
\item[{\rm (iii)}] $p \equiv 1 \imod{4}$, $p \geqs 13$ and ${\rm ifix}(T)> n^{4/9}$.
\end{itemize}
\end{prop}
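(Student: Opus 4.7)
The plan is to write down the involutions in $H = {\rm AGL}_1(p)$ explicitly, decide when they lie in $T = A_p$, and then apply Lemma \ref{l:aff} together with \eqref{e:fix} to compute ${\rm fix}(t_1)$ in closed form. With ${\rm AGL}_1(p) = V{:}L$ and $V = \mathbb{F}_p$, the condition that $(v,x) \in H$ be an involution forces $x = -1$ (for odd $p$), giving exactly $p$ involutions $u \mapsto v-u$ in $H$, each with the unique fixed point $v/2$ and cycle shape $(2^{(p-1)/2},1)$ on $\mathbb{F}_p$. The sign of such a permutation is $(-1)^{(p-1)/2}$, which immediately yields case (i): if $p \equiv 3 \imod{4}$ then $H_0$ contains no involutions and hence ${\rm ifix}(T) = 0$. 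Assume from now on that $p \equiv 1 \imod{4}$, so that all $p$ involutions of $H$ lie in $H_0$. Lemma \ref{l:aff} with $d = k = 1$ gives $|C_H(t_1)| = p-1$, so these $p$ involutions form a single $H$-class, and since they share the same cycle shape they remain $T$-conjugate. Setting $k = (p-1)/2$, we have $|t_1^T| = p!/(k!\,2^k)$ and $n = (p-2)!$, so \eqref{e:fix} produces the clean formula
\[
{\rm fix}(t_1) \;=\; \frac{p \cdot n}{|t_1^T|} \;=\; \frac{k! \cdot 2^k}{p-1}.
\]

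For $p = 5$ this formula evaluates to ${\rm fix}(t_1) = 2$ and $n = 6$, which is case (ii). For case (iii) — the only real obstacle — we must prove ${\rm fix}(t_1) > n^{4/9}$ for every prime $p \geq 13$ with $p \equiv 1 \imod{4}$. Raising both sides to the ninth power, this is equivalent to showing
\[
g(k) \;:=\; \frac{(k!)^9 \cdot 2^{9k}}{(2k)^9 \cdot ((2k-1)!)^4} \;>\; 1
\]
for every such $k$. A direct calculation gives the closed-form recursion
\[
\frac{g(k+1)}{g(k)} \;=\; \frac{32\,k^5}{(2k+1)^4},
\]
which exceeds $1$ for all $k \geq 2$, so $g$ is strictly increasing on that range. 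It therefore suffices to check the single base case $g(6) > 1$, that is, at $p = 13$ one has ${\rm fix}(t_1) = 3840 > (11!)^{4/9} \approx 2390$, which is an elementary numerical verification. The monotonicity collapses the entire infinite family to this one computation, so no further analysis is needed.
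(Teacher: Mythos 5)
Your proof is correct and follows essentially the same route as the paper: both derive the exact closed form ${\rm fix}(t)=2^{(p-1)/2}\bigl(\tfrac{1}{2}(p-1)\bigr)!/(p-1)$, dispose of cases (i) and (ii) directly, and then prove (iii) by showing a single ratio function is increasing and checking the base case $p=13$. Your recursion $g(k+1)/g(k)=32k^5/(2k+1)^4$ is precisely the ninth power of the paper's $f(p+2)/f(p)=(p-1)^{5/9}p^{-4/9}$, so the monotonicity step is identical up to this harmless reformulation.
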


\begin{proof}
First observe that $H_0 = p{:}\frac{1}{2}(p-1)$ and $n = (p-2)!$. In particular, if $p \equiv 3 \imod{4}$ then $|H_0|$ is odd and thus ${\rm ifix}(T) = 0$ as claimed. Now assume $p \equiv 1 \imod{4}$. If $p=5$ then $H_0 = D_{10}$ and  ${\rm ifix}(T) = 2$, so let us assume $p \geqs 13$. Let $t \in H_0$ be an involution. By applying  Lemma \ref{l:aff}, noting that $H_0$ has a unique conjugacy class of involutions, we deduce that 
\[
|t^T \cap H_0| = p,\;\; |t^T| = \frac{p!}{2^{(p-1)/2}\left(\frac{1}{2}(p-1)\right)!}
\]
and thus \eqref{e:fix} gives
\begin{equation}\label{e:eq1}
{\rm ifix}(T) = {\rm fix}(t) = \frac{2^{(p-1)/2}\left(\frac{1}{2}(p-1)\right)!}{p-1}.
\end{equation}
It follows that ${\rm ifix}(T)>n^{4/9}$ if and only if $f(p)>1$, where
\[
f(p):=\frac{2^{(p-1)/2}\left(\frac{1}{2}(p-1)\right)!}{(p-1)(p-2)!^{4/9}}.
\]
The result now follows since $f(p+2) = (p-1)^{5/9}p^{-4/9}f(p) > f(p)$ and $f(13)>1$.
\end{proof}

\begin{rem}\label{r:aff2}
The proof of Proposition \ref{p:aff1} reveals that there are infinitely many groups $G$ as in Theorem \ref{t:main} with $1 \leqs {\rm ifix}(T) \leqs n^{1/2}$. Indeed, if we take $T = A_p$ and $H = {\rm AGL}_{1}(p) \cap G$, where $p$ is a prime such that $p \equiv 1 \imod{4}$, then ${\rm ifix}(T)$ is given in \eqref{e:eq1} and we deduce that ${\rm ifix}(T) >  n^{1/2}$ if and only if $g(p) > 1$, where
\[
g(p):=\frac{2^{(p-1)/2}\left(\frac{1}{2}(p-1)\right)!}{(p-1)(p-2)!^{1/2}}.
\]
Since $g(p+2) < \left(\frac{p}{p-1}\right)^{1/2}g(p+2) = g(p)$ and $g(5)<1$, it follows that ${\rm ifix}(T) \leqs  n^{1/2}$ for every prime $p$ with $p \equiv 1 \imod{4}$. 
\end{rem}

\begin{prop}\label{p:aff2}
If $d \geqs 2$ then either ${\rm ifix}(T) > n^{4/9}$ or $(d,p,{\rm ifix}(T),n) = (2,3,8,840)$.
\end{prop}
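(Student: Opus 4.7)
Under the running assumption $m > 25$ of this section, I must show ${\rm ifix}(T) > n^{4/9}$ for every $m = p^d$ with $d \geqs 2$; the exceptional case $(d,p)=(2,3)$ (giving $m=9$) was already established in Proposition \ref{p:alt}. The plan is to split the argument into a \emph{large regime}, handled by the general asymptotic estimate of Lemma \ref{l:cor}, and a \emph{small regime}, handled by explicit computation or an explicit application of Lemma \ref{l:aff}.

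For the large regime, I would apply Lemma \ref{l:cor} directly. Using $|H_0| \leqs |{\rm AGL}_d(p)| = p^d|{\rm GL}_d(p)| < p^{d(d+1)}$ together with Stirling's lower bound $(p^d)! > (p^d/e)^{p^d}$, the hypothesis $|H_0|^{100} < |T|$ reduces to an inequality roughly of the form $p^d \cdot d \log p \gg 100 d(d+1)\log p$, which holds whenever $p^d$ exceeds an explicit threshold of order $100(d+1)$. For all such $(d,p)$, Lemma \ref{l:cor} immediately yields ${\rm ifix}(T) > n^{4/9}$.

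The small regime is a finite explicit list of pairs $(d,p)$ with $25 < p^d$ below the threshold, spanning $d \leqs 9$ and small primes $p$. For each such pair I would verify ${\rm ifix}(T) > n^{4/9}$ by direct {\sc Magma} computation, constructing $G$ in its primitive action of degree $n = |T{:}H_0|$ and evaluating \eqref{e:fix} over representatives of the involution classes of $H_0$. An analytic alternative is to use Lemma \ref{l:aff}: take $k=1$ in Definition \ref{d:tk} (producing an involution $t_1$ with comparatively few $2$-cycles, and hence a comparatively small $|t_1^T|$), bound $|t_1^T \cap H_0| \geqs |H_0{:}C_{H_0}(t_1)|$ via the centralizer formula of Lemma \ref{l:aff}, read off $|t_1^T|$ from the cycle-shape of $t_1$ as an element of $S_m$, and compare the resulting ${\rm fix}(t_1) = |t_1^T \cap H_0|\cdot n/|t_1^T|$ with the upper bound $n \leqs |T|/|H_0|$.

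The main obstacle I anticipate is efficient bookkeeping through the small-case list, to certify that no further exception emerges near the threshold where Lemma \ref{l:cor} just fails to apply. A secondary technicality is that Lemma \ref{l:aff} bifurcates between $p$ odd and $p=2$, so the characteristic-two cases ($p=2$ with $5 \leqs d \leqs 9$) require the cycle-shape and centralizer formulas for $x_k = [A^k, I_{d-2k}]$ rather than $[-I_k, I_{d-k}]$; the strategy, however, is otherwise identical.
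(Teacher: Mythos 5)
Your strategy is workable in outline but it is genuinely different from the paper's, and two points need attention. The paper never invokes Lemma \ref{l:cor} for affine subgroups: outside a short computational list ($p=2$, $d\leqs 6$ and $p=3$, $d\leqs 4$, so $m\leqs 81$, handled in {\sc Magma}), it works throughout with the single explicit involution $t_2$ of Definition \ref{d:tk}, reads off $|t_2^T|$ from its cycle-shape via Lemma \ref{l:aff}, uses $|t_2^T\cap H_0|\geqs p^2$ when $d=2$ (and the trivial bound $\geqs 1$ when $d\geqs 3$) together with $n>(p^d)!\,p^{-d(d+1)}$, and closes the argument by monotonicity in $p$ and $d$. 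Your Lemma \ref{l:cor} route is legitimate where its hypothesis holds, but be aware that $|H_0|^{100}<|T|$ fails over a larger range than you indicate: one needs roughly $p^d(d\log p-1)>100\,d(d+1)\log p$, so for $d=2$ it fails for all $p\leqs 17$ and is borderline at $p=19$, and for $p=2$ it fails up to about $d=10$. Your ``small regime'' therefore includes degrees such as $m=289$, $343$, $625$, $729$ and $1024$, not just the handful of tiny cases one might expect.

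Two concrete problems. First, you cannot ``construct $G$ in its primitive action of degree $n$'': already for $(d,p)=(2,7)$ one has $n=49!/|{\rm AGL}_2(7)|\approx 10^{58}$. The verification must be carried out in the natural degree-$m$ representation, computing $|t^T\cap H_0|$ and $|t^T|$ there and then applying \eqref{e:fix}, exactly as in Proposition \ref{p:alt}; since you cite \eqref{e:fix} this is presumably what you intend, but the procedure as written is not executable. Second, and more seriously, your analytic fallback with $t_1$ fails whenever $p\equiv 3\imod{4}$: by Lemma \ref{l:aff} the element $t_1$ has $p^{d-1}(p-1)/2$ transpositions, which is odd for such $p$, so $t_1$ is an odd permutation and does not lie in $T=A_m$ at all. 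This is precisely why the paper works with $t_2$, whose transposition count $p^{d-2}(p^2-1)/2$ is always even for odd $p$. Since the cases $d=2$ with $p\in\{7,11,19\}$ sit squarely inside your small regime, the analytic alternative cannot rescue them as stated; switching to $t_2$ repairs this, but for $d=2$ the margin over $n^{4/9}$ is narrow enough that something like the paper's careful monotonicity analysis of the resulting function of $p$ is genuinely required.
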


\begin{proof}
First assume $d=2$, so $m = p^2$ and $p$ is odd. If $p = 3$ or $5$ then the result follows from Proposition \ref{p:alt}, so let us assume $p \geqs 7$. As in Definition \ref{d:tk}, set $t = t_2 = (e_1,x_2) \in H_0$. By applying Lemma \ref{l:aff} we deduce that 
\[
|t^T\cap H_0| \geqs |t^{H_0}| = p^2,\;\; |t^T| = \frac{(p^2)!}{2^{(p^2-1)/2}(\frac{1}{2}(p^2-1))!}
\]
and thus ${\rm ifix}(T)>n^{4/9}$ if $f(p)>1$, where
\[
f(p):= \frac{2^{(p^2-1)/2}p^{1/3}\left(\frac{1}{2}(p^2-1)\right)!}{(p-1)^{5/9}(p^2-1)^{5/9}(p^2)!^{4/9}}.
\]

We claim that $f(p+2)>f(p)$. To see this, set $k = (p^2+4p+3)/2$ and observe that 
\[
\frac{f(p+2)}{f(p)} = \a \cdot 2^{2p+1}\frac{k!}{(k-2p-2)!}\left(\frac{(2k-4p-3)!}{(2k+1)!}\right)^{4/9}
\]
where
\[
\a = 2\left(\frac{p-1}{p+1}\right)^{5/9}\left(\frac{p+2}{p}\right)^{1/3}\left(\frac{p^2-1}{2k}\right)^{5/9}>1.
\]
By taking logarithms and using the bound $-\frac{x}{1-x} < \log (1-x) < -x$ for all $0<x<1$, it is straightforward to show that 
\begin{equation}\label{e:neww}
a^be^{-\frac{b(b-1)}{2(a-b)}} \leqs \frac{a!}{(a-b)!}\leqs a^be^{-\frac{b(b-1)}{2a}}
\end{equation}
for all integers $1 \leqs b < a$. This implies that 
\[
\frac{f(p+2)}{f(p)}  > \frac{1}{2}e^{\b}\left(\frac{2k}{(2k+1)^{8/9}}\right)^{2p+2},
\]
where
\[
\b = \frac{4}{9}\cdot \frac{(4p+4)(4p+3)}{4k+2} - \frac{(2p+2)(2p+1)}{2(k-2p-2)}.
\]
One checks that this lower bound is minimal when $p=7$, which gives $f(p+2)>f(p)$ as claimed. Moreover, since $f(7)>1$, we conclude that ${\rm ifix}(T)>n^{4/9}$. 

Now assume $d \geqs 3$. If $p=2$ and $d \leqs 6$, then a {\sc Magma} calculation gives ${\rm ifix}(T)>n^{4/9}$. Similarly, one can check that the same conclusion holds if $p=3$ and $d \leqs 4$. In order to establish the desired bound in the remaining cases, 
set $t = t_2 \in H_0$ and note that $t$ has cycle-shape $(2^{p^{d-2}(p^2-1)/2},1^{p^{d-2}})$ by Lemma \ref{l:aff}. Now $|t^T \cap H_0| \geqs 1$ and $|{\rm GL}_{d}(p)|<p^{d^2}$, so $n > (p^d)!p^{-d(d+1)}$ and it follows that ${\rm ifix}(t)>n^{4/9}$ if $g(d,p)>1$, where
\[
g(d,p) := \frac{\left(\frac{1}{2}p^{d-2}(p^2-1)\right)!\left(p^{d-2}\right)!2^{p^{d-2}(p^2-1)/2}}{p^{5d(d+1)/9}\left(p^d\right)!^{4/9}}.
\]

If $d$ is fixed, then by arguing as above one can show that $g(d,p)$ is an increasing function in $p$. Similarly, one checks that if $p$ is fixed, then $g(d,p)$ is increasing as a function of $d$ (here we are assuming that $d \geqs 7$ if $p=2$ and $d \geqs 5$ if $p=3$, which is valid in view of the above remarks). Therefore, for $p \geqs 5$ we have $g(d,p) \geqs g(3,5)>1$. Similarly, if $p=3$ and $d \geqs 5$ then $g(d,p) \geqs g(5,3)>1$ and for $p=2$ with $d \geqs 7$ we get $g(d,p) \geqs g(7,2)>1$. We conclude that ${\rm ifix}(T)> n^{4/9}$ if $d \geqs 3$ and the proof of the proposition is complete. 
\end{proof}

\subsection{Product-type subgroups}\label{ss:prod}

Now assume $H$ is a product-type subgroup of $G$, so $H = (S_k \wr S_r) \cap G$ and $m = k^r$, where $k \geqs 5$ and $r \geqs 2$. Set $\Gamma = \{1, \ldots, k\}$ and note that the embedding of $H$ in $G$ arises from the product action of $H$ on the Cartesian product $\Gamma^r$. That is, for every $(x_1,\ldots,x_r)\s \in H$ and $(\gamma_1, \ldots, \gamma_r) \in \Gamma^r$ we have
\[
(\gamma_1, \ldots, \gamma_r)^{(x_1,\ldots,x_r)\s} = \left(\gamma_1^{x_1}, \ldots, \gamma_r^{x_r}\right)^{\s} = \left(\gamma_{1^{\s^{-1}}}^{x_{1^{\s^{-1}}}},\ldots, \gamma_{r^{\s^{-1}}}^{x_{r^{\s^{-1}}}}\right).
\]
In particular, let us observe that
\[
n \geqs \frac{(k^r)!}{2(k!)^rr!}.
\]

\begin{prop}\label{p:prod}
If $H$ is a product-type subgroup of $G$, then ${\rm ifix}(T)>n^{4/9}$.
\end{prop}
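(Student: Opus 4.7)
The plan is to exhibit an explicit involution $t \in H_0$ with many fixed points and then invoke \eqref{e:fix} together with the given lower bound $n \geq (k^r)!/(2(k!)^r\,r!)$. A convenient choice is the image in $S_m$ of $(\sigma,1,\ldots,1)\in S_k\wr S_r$, where $\sigma\in S_k$ is a transposition when $k^{r-1}$ is even and a product of two disjoint transpositions (available since $k\geq 5$) when $k^{r-1}$ is odd; in either case $t$ is an even permutation, hence belongs to $H_0$. A direct inspection of the product action on $\Gamma^r$ shows that $t$ has cycle-shape $(2^a,1^{m-2a})$ on $\{1,\ldots,m\}$ with $a=k^{r-1}$ in the first case and $a=2k^{r-1}$ in the second, and the centralizer in $S_m$ always contains an odd element, so
\[
|t^T|=\frac{m!}{a!\,(m-2a)!\,2^a}.
\]

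Since $|t^T\cap H_0|\geq 1$, \eqref{e:fix} gives ${\rm fix}(t)\geq n/|t^T|$, so the desired bound ${\rm ifix}(T)>n^{4/9}$ follows once $|t^T|<n^{5/9}$; after substituting the lower bound for $n$, this amounts to proving $f(k,r)>1$, where
\[
f(k,r):=\frac{a!\,(m-2a)!\,2^a}{m!}\left(\frac{(k^r)!}{2(k!)^r\,r!}\right)^{5/9}.
\]
Applying the Stirling-type estimate \eqref{e:neww} used in the proof of Proposition \ref{p:aff2}, one checks that $\log n\sim rk^r\log k$, while $\log|t^T|$ is dominated by $(r+1)k^{r-1}\log k$ in the single-transposition case and by $2(r+1)k^{r-1}\log k$ in the double case. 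The ratio of the leading terms is therefore $rk/(r+1)$ or $rk/(2(r+1))$, and for $k\geq 5$, $r\geq 2$ with $m=k^r>25$ both quantities are bounded below by $15/8>9/5$.

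To convert this asymptotic observation into a rigorous argument I would follow the monotonicity template of Proposition \ref{p:aff2}: treating the two parities of $k^{r-1}$ separately, verify that $f(k,r+1)/f(k,r)\geq 1$ for each fixed $k$ and that $f(k+1,r)/f(k,r)\geq 1$ for each fixed $r$ by explicit manipulation of factorials, and then close out a short list of base cases. The main obstacle is the boundary of the parameter range: the estimate is tightest at $(k,r)=(5,3)$, where a double transposition is forced and the leading ratio $15/8$ is only marginally above $9/5$, so the lower-order $(k-4)\log(k-4)$, $\log r!$ and constant terms must be tracked with some care. The cleanest route is to dispose of all $(k,r)$ with $m=k^r$ below a modest threshold (say $m\leq 60$, $k\geq 5$, $r\geq 2$, $m>25$) by a direct {\sc Magma} computation in the style of Proposition \ref{p:alt}, which frees the general monotonicity argument to start from a safely large base; the even/odd split in $k^{r-1}$ then amounts to routine bookkeeping.
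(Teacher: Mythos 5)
Your argument is essentially the paper's own proof: the paper also takes the involution $t=(t_1,1,\ldots,1)\in(A_k)^r$ with $t_1=(1,2)(3,4)$, uses the trivial bound $|t^T\cap H_0|\geqslant 1$ together with the exact class size for cycle-shape $(2^{2k^{r-1}},1^{(k-4)k^{r-1}})$, and reduces to a factorial inequality $f(k,r)>1$ verified by monotonicity in $r$ and then in $k$, with the small cases $k^r=25$ and $k=6$, $r=2$ handled computationally. The only difference is your parity case-split, which is unnecessary: since $2k^{r-1}$ is always even, the double transposition lands in $H_0$ for every $k$ and $r$, so you can drop the single-transposition variant and the attendant bookkeeping.
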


\begin{proof}
Fix the involution $t = (t_1, 1, \ldots, 1) \in (A_k)^r < H_0$, where $t_1 = (1,2)(3,4) \in A_k$. By considering the action of $H$ on $\Gamma^r$, it is easy to see that $t$ has exactly $(k-4)k^{r-1}$ fixed points and so it has cycle-shape $(2^{2k^{r-1}},1^{(k-4)k^{r-1}})$ as an element of $T$. Therefore,
\[
|t^T| = \frac{(k^r)!}{2^{2k^{r-1}}(2k^{r-1})!((k-4)k^{r-1})!}
\]
and using the trivial bound $|t^T \cap H_0| \geqs 1$ we deduce that ${\rm fix}(t)>n^{4/9}$ if $f(k,r)>1$, where
\[
f(k,r):=\frac{2^{18k^{r-1}-5}(2k^{r-1})!^9((k-4)k^{r-1})!^9}{(k^r)!^4(k!)^{5r}(r!)^5}.
\]

A routine calculation shows that if $k$ is fixed, then $f(k,r)$ is an increasing function in $r$, so we may assume $r=2$. If $k = 5$ then $m=25$ and so this case was handled in Proposition \ref{p:alt}. Similarly, if $k = 6$ then an easy {\sc Magma} computation shows that ${\rm ifix}(T)>n^{4/9}$. Finally, if $k \geqs 7$ then  
\[
\frac{f(k+1,2)}{f(k,2)} = 2^{27}\frac{(2k+1)^9}{k+1}\left(\frac{(k^2)!}{(k^2+2k+1)!}\right)^4\left(\frac{(k^2-2k-3)!}{(k^2-4k)!}\right)^9
\]
and by applying the bounds in \eqref{e:neww} it is straightforward to show that this ratio is greater than $1$. In particular, $f(k,2)$ is an increasing function in $k$ and the result follows since $f(7,2)>1$. 
\end{proof}

\subsection{Diagonal-type subgroups}\label{ss:diag}

Here $H = (S^k.({\rm Out}(S) \times S_k)) \cap G$ and $m = |S|^{k-1}$, where $k \geqs 2$ and $S$ is a non-abelian finite simple group. The embedding of $H$ in $G$ is afforded by a natural (faithful) action of $H$ on the set of cosets of the diagonal subgroup $\{(s, \ldots, s) \, : \, s \in S\}$ of $S^k$.

\begin{prop}\label{p:diag}
If $H$ is a diagonal-type subgroup of $G$, then ${\rm ifix}(T)>n^{4/9}$.
\end{prop}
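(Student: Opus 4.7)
The plan is to follow the strategy of Lemma~\ref{l:basic}, choosing a specific involution $t \in H_0$ and applying the bound $|t^T \cap H_0| \geq 1$. Pick any involution $g \in S$ (possible since $|S|$ is even) and set $t = (g, 1, \ldots, 1) \in S^k \leq H_0$. Identifying $S^k/D$ with $S^{k-1}$ via $D(1, s_2, \ldots, s_k) \leftrightarrow (s_2, \ldots, s_k)$, the action of $t$ on the coset space is $(s_2, \ldots, s_k) \mapsto (g^{-1}s_2, \ldots, g^{-1}s_k)$, which is fixed-point-free and of order two, hence has cycle shape $(2^{m/2})$ in $S_m$. Since every non-abelian finite simple group has order divisible by $4$, the integer $m/2 = |S|^{k-1}/2$ is even, so $t$ is an even permutation and lies in $T$.

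The centralizer satisfies $|C_T(t)| = 2^{m/2 - 1}(m/2)!$ (the $S_m$-centralizer is $C_2 \wr S_{m/2}$, which contains transpositions and therefore meets $A_m$ with index $2$). Hence
\[
\frac{|C_T(t)|^2}{|T|} = \frac{2^{m-1}}{\binom{m}{m/2}},
\]
and the elementary bound $\binom{m}{m/2} < 2^{m-1}$ (valid for $m \geqs 4$, immediate from $\binom{m}{m/2} \leqs 2^m/\sqrt{m/2}$) gives $|t^T| < |T|^{1/2}$. Setting $\beta = \log|t^T|/\log|T|$, we thus have $\beta < 1/2$. Next, combining $|H_0| \leqs |S|^k\cdot|{\rm Out}(S)|\cdot k!$ with Stirling applied to $|T| = m!/2$ (where $m = |S|^{k-1} \geqs 60$) gives $\alpha := \log|H_0|/\log|T| < 1/10$ uniformly in $(k,S)$. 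The tightest case is $(S, k) = (A_5, 2)$, where $|H_0| \leqs 14400$ and $\log|T| \approx 187.94$, yielding $\alpha \approx 0.051$; for any larger $|S|$ or $k$, $\log|T|$ grows on the order of $|S|^{k-1}\log|S|^{k-1}$ while $\log|H_0|$ is only $O(k\log|S|)$, so $\alpha$ drops rapidly.

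The inequality $5 - 5\alpha - 9\beta > 0$ then holds, so as in the proof of Lemma~\ref{l:basic} the bound ${\rm fix}(t) \geqs n|T|^{-\beta}$ (obtained from $|t^T \cap H_0| \geqs 1$) exceeds $n^{4/9}$, which gives ${\rm ifix}(T) > n^{4/9}$. The main technical obstacle is verifying the uniform bound $\alpha < 1/10$: the boundary case $(A_5, 2)$ is the tightest and requires a direct numerical check, while all other cases follow from routine Stirling estimates. An alternative organisation would be to invoke Lemma~\ref{l:cor} directly whenever $|H_0|^{100} < |T|$, which covers every diagonal-type case except $(S, k) \in \{(A_5, 2), (L_2(7), 2)\}$, and then dispose of these two exceptions using the explicit construction above.
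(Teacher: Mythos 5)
Your proof is correct, but it follows a genuinely different route from the paper's. The paper splits into two cases: for $m < 200$ (i.e.\ $k=2$ and $S \cong A_5$ or ${\rm L}_{2}(7)$) it constructs $H$ explicitly in {\sc Magma} and verifies $5-5\a-9\b>0$ computationally, while for $m \geqs 200$ it establishes the crude order bound $|H_0|^{100} < |T|$ (using $|{\rm Out}(S)| \leqs |S|/30$) and invokes Lemma \ref{l:cor}, which in turn rests on the generic class-size bound $|t^T| < |T|^{11/20}$ of Lemma \ref{l:inv} valid for \emph{all} involutions. You instead exhibit a specific involution $(g,1,\ldots,1)$ in the base group whose action on the cosets of the diagonal is fixed-point-free, so its cycle type in $S_m$ is known exactly; this yields the sharper exponent $\b < 1/2$ for that particular class, and consequently you only need $\a < 1/10$ rather than $\a < 1/100$, a threshold that is met uniformly, including in the extremal case $(S,k)=(A_5,2)$ where $\a \approx 0.051$. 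The payoff is a single uniform, computer-free argument covering all diagonal-type subgroups; the cost is that you must verify the geometric facts about your chosen involution (fixed-point-freeness, evenness via $4 \mid |S|$, the centralizer $C_2 \wr S_{m/2}$ meeting $A_m$ in index $2$), all of which you do correctly, plus the routine monotonicity check that $(A_5,2)$ really is the worst case for $\a$, which you rightly flag as the only point needing care. Two trivial quibbles: the intermediate bound $\binom{m}{m/2} \leqs 2^m/\sqrt{m/2}$ only implies $\binom{m}{m/2}<2^{m-1}$ once $m > 8$ (irrelevant here since $m \geqs 60$), and Lemma \ref{l:basic} as stated hypothesises the class-size bound for every involution in $H_0$, so you are right to appeal to its proof rather than its statement.
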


\begin{proof}
First assume $m<200$, so $k=2$ and $S$ is isomorphic to $A_5$ or ${\rm L}_{2}(7)$. In both cases, we can use the database of primitive groups in {\sc Magma} to construct $H$ as a subgroup of $S_m$ and then it is a routine computation to check that $5-5\a-9\b>0$ for constants $\a$ and $\b$ such that  $|H_0| \leqs |T|^{\a}$ and $|t^T| \leqs |T|^{\b}$ for every involution $t \in H_0$. Therefore, ${\rm ifix}(T)>n^{4/9}$ by Lemma \ref{l:basic}.

For the remainder, we may assume $m \geqs 200$. We claim that $|H_0|^{100}<|T|$ and thus ${\rm ifix}(T)>n^{4/9}$ by Lemma \ref{l:cor}. To see this, let us first observe that $|{\rm Out}(S)| \leqs |S|/30$ by \cite[Lemma 2.2]{Quick}, so 
$|H_0| \leqs \frac{1}{30}\ell^{k+1}k!$ where $\ell = |S|$. It follows that $|H_0|^{100}<|T|$ if $f(k,\ell)>1$, where
\[
f(k,\ell):= \frac{1}{2}\left(\frac{30}{\ell^{k+1}k!}\right)^{100}(\ell^{k-1})!
\]

If $k=2$ then $m = \ell$ and the condition $m \geqs 200$ implies that $\ell \geqs 360$ since $A_6$ is the smallest non-abelian simple group with order at least $200$ (up to isomorphism). For $\ell \geqs 360$ we have
\[
\frac{f(2,\ell+1)}{f(2,\ell)} = \left(\frac{\ell}{\ell+1}\right)^{300}(\ell+1) > 1,
\]
so $f(2,\ell)$ is increasing as a function of $\ell$ and we have $f(2,360)>1$. Similarly, if $k \geqs 3$ then $\ell \geqs 60$ and $f(k,\ell)$ is an increasing function in both $k$ and $\ell$. The result now follows since $f(3,60)>1$.
\end{proof}

\subsection{Almost simple subgroups}\label{ss:as}

To complete the proof of Theorem \ref{t:main} for symmetric and alternating groups, we may assume that $T = A_m$ with $m>25$ and $H$ is an almost simple subgroup acting primitively on $\Gamma = \{1, \ldots, m\}$. We will write  $S$ to denote the socle of $H$ (note that $S \ne T$ since $H$ is a core-free subgroup of $G$).

First we handle the low degree groups with $m \leqs 600$. 

\begin{prop}\label{p:as1}
If $25<m \leqs 600$ then ${\rm ifix}(T)>n^{4/9}$.
\end{prop}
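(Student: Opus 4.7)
The proof will be a computer calculation in the spirit of Proposition \ref{p:alt}, exploiting the fact that the primitive permutation groups of degree at most $600$ are available in a database inside {\sc Magma}. The plan is to loop over the relevant degrees $m$ in the range $26 \leqs m \leqs 600$, and for each $m$ extract from the database a set of representatives of the conjugacy classes of almost simple primitive subgroups $H$ of $G = S_m$ (or $G = A_m$), retaining only those with $G = HT$. For each such $H$ we then set $H_0 = H \cap T$ and verify the bound ${\rm ifix}(T) > n^{4/9}$, where $n = |T:H_0|$.

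The key observation is that Lemma \ref{l:cor} resolves the overwhelming majority of cases essentially for free: whenever $|H_0|^{100} < |T|$ the desired bound follows immediately. By a theorem of Maróti, an almost simple primitive group of degree $m$ has order at most $m^{1 + \lfloor \log_2 m \rfloor}$ (with the alternating and symmetric actions excluded, which do not arise here since $S \ne T$), so for $m$ not too small the hypothesis of Lemma \ref{l:cor} is satisfied by a wide margin. In practice, I would simply test the inequality $|H_0|^{100} < |T|$ numerically for each $H$ supplied by the database; this disposes of all but a handful of exceptions, concentrated among the smallest values of $m$.

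For the short list of groups that survive this test, I would compute ${\rm ifix}(T)$ directly via the formula \eqref{e:fix}. Working in the natural permutation representation of degree $m$, one constructs $H_0 = H \cap T$ inside $T = A_m$, runs through a set of representatives of the involution classes $t^T$ of $T$ (parametrised by cycle shape $(2^k, 1^{m-2k})$, with class size $m!/(k!(m-2k)!2^k)$), counts $|t^T \cap H_0|$ by conjugacy-testing inside $T$ the involutions of $H_0$ of cycle shape $(2^k, 1^{m-2k})$, and applies \eqref{e:fix}. Taking the maximum over involution classes yields ${\rm ifix}(T)$, and one verifies numerically that ${\rm ifix}(T) > n^{4/9}$.

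The only real obstacle is the sheer number of primitive groups of degree at most $600$, but the database handles the bookkeeping, and the per-group computation is trivial because $|H_0|$ is very small compared with $|T|$. Consequently the argument is entirely routine once the enumeration is in place, and no new group-theoretic ideas are needed beyond Lemma \ref{l:cor} and the formula \eqref{e:fix}.
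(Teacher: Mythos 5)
Your overall skeleton (enumerate the almost simple primitive groups of degree $26$--$600$ from the {\sc Magma} database and check each one) matches the paper, but the claim that carries the weight of your argument is false: the criterion $|H_0|^{100}<|T|$ of Lemma \ref{l:cor} does \emph{not} dispose of ``all but a handful'' of cases --- it disposes of almost none of them, and the few cases where it could ever succeed lie at the \emph{large} end of the range, not the small end. Since the socle $S$ of $H$ is a transitive non-abelian simple group, we have $S\leqs H_0$ and $|H_0|\geqs |S|>m$, so $|H_0|^{100}<|T|=m!/2$ would force $m^{100}<m!/2$, which fails for every $m\leqs 125$; hence for a large initial segment of your range the test cannot succeed for \emph{any} group. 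Even for larger $m$ it fails for most of the groups that actually occur: for example, $S_{35}$ acting on $2$-subsets has degree $595$ and $|H_0|=|A_{35}|\approx 5\times 10^{39}$, whereas $(595!/2)^{1/100}\approx 10^{14}$. Mar\'{o}ti's bound $|H|\leqs m^{1+\lfloor\log_2 m\rfloor}$ does not rescue this: at $m=600$ it gives $|H|\leqs 600^{10}$, and $100\log(600^{10})\approx 6400$ comfortably exceeds $\log(600!)\approx 3242$, so the bound only becomes strong enough to feed into Lemma \ref{l:cor} when $m$ is well beyond $1000$.

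The consequence is that your ``short list of surviving groups'' is in reality essentially the entire list of $766$ groups, so the direct computation via \eqref{e:fix} is the whole proof rather than a mopping-up step. That computation is viable in principle --- it is essentially what the paper does --- but note the obstruction the paper explicitly flags: for $60<m\leqs 600$ some of the subgroups $H_0$ are large enough that computing their conjugacy classes (which you need in order to count the involutions of each cycle type, and hence $|t^T\cap H_0|$) is expensive in time and memory. The paper sidesteps this by replacing the exact count $|t^T\cap H_0|$ with the lower bound $|t^J|$ for $J$ a Sylow $2$-subgroup of $H_0$ (and $J=H_0$ itself when $m\leqs 60$), checking in each case that $\max_t |t^{J}|/|t^T|>n^{-5/9}$, which gives ${\rm ifix}(T)\geqs n\cdot\max_t |t^{J}|/|t^T|>n^{4/9}$. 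You should either adopt a shortcut of this kind or, at minimum, delete the false claim about Lemma \ref{l:cor} and accept that the direct verification must be carried out across the full list.
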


\begin{proof}
To construct $H$ as a subgroup of $G$ we use the database of primitive groups in {\sc Magma}, via the command
\[
\texttt{PrimitiveGroups([26..600] : Filter:="AlmostSimple")}.
\]
Once we have removed the groups with $S = T$, we are left with $766$ cases to consider. Define
\[
\a(J) = \max\left\{\frac{|t^{J}|}{|t^T|} \,:\, \mbox{$t \in J$ is an involution}\right\}
\]
for each subgroup $J$ of $H_0$. Given a specific subgroup $J$, we can compute $\a(J)$ by finding a set of representatives for the conjugacy classes of involutions in $J$ and then for each representative $t$ we compute the number of fixed points of $t$ on $\{1, \ldots, m\}$, which allows us to calculate $|t^T|$. Note that ${\rm ifix}(T) \geqs \a(J)n$.

For $m \leqs 60$ it is easy to check that $\a(H_0)>n^{-5/9}$ and thus ${\rm ifix}(T) > n^{4/9}$. Similarly, if $60<m\leqs 600$ and $P$ is a Sylow $2$-subgroup of $H_0$, then $\a(P)>n^{-5/9}$ and the result follows (this approach avoids the problem of computing a set of conjugacy class representatives in $H_0$, which can be expensive in terms of time and memory).
\end{proof}

For the remainder, we may assume $m>600$. Our basic aim is to establish the bound
\begin{equation}\label{e:T}
|H_0|^{100}<|T|
\end{equation}
whenever possible, noting that this gives ${\rm ifix}(T)>n^{4/9}$ via Lemma \ref{l:cor}. To do this, it will be convenient to make a distinction between the cases where $H$ is standard or non-standard, according to the following definition.

\begin{defn}\label{d:std}
Let $H \leqs {\rm Sym}(\Gamma)$ be an almost simple primitive group with socle $S$ and point stabilizer $K$. Then $H$ is \emph{standard} if one of the following holds:
\begin{itemize}\addtolength{\itemsep}{0.2\baselineskip}
\item[{\rm (i)}] $S=A_{k}$ is an alternating group and $\Gamma$ is a set of subsets or partitions of $\{1,\ldots, k\}$.
\item[{\rm (ii)}] $S$ is a classical group with natural module $V$ and $\Gamma$ is a set of subspaces (or pairs of subspaces) of $V$.
\item[{\rm (iii)}] $S = {\rm Sp}_{2d}(q)$, $q$ is even and $K \cap S = {\rm O}_{2d}^{\pm}(q)$.
\end{itemize}
In all other cases, $H$ is \emph{non-standard}. 
\end{defn}

This definition facilitates the statement of the following key result of Liebeck and Saxl (see \cite[Proposition 2]{LSa2}).

\begin{prop}\label{p:bd}
Let $H \leqs {\rm Sym}(\Gamma)$ be a non-standard almost simple primitive group of degree $m \geqs 25$. Then $|H|<m^5$.
\end{prop}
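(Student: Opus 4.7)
The plan is to split the proof by the type of the socle $S$ of $H$ and, in each case, produce a suitable upper bound on $|H|$ using the restrictions imposed by non-standardness.

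If $S$ is sporadic, there are only finitely many almost simple groups to consider and the maximal subgroups of each are completely known; since $m \geqs 25$ eliminates the very small-index actions, a direct check (for example, via the \textsf{GAP} Character Table Library \cite{GAPCTL}) confirms the bound $|H| < m^5$ in every case. If $S = A_k$ with $k \geqs 5$ and $K$ is the point stabilizer of $H$, then $M := K \cap S$ is a maximal subgroup of $A_k$, and non-standardness rules out the intransitive and imprimitive stabilizers, so that $M$ must act primitively on $\{1, \ldots, k\}$. The Praeger--Saxl bound then yields $|M| \leqs 4^k$, whence $m \geqs |A_k : M| \geqs \tfrac{1}{2} k!\, 4^{-k}$; combined with $|H| \leqs k!$ one checks that $|H| < m^5$ for all $k$ above an absolute threshold, and the finitely many remaining values of $k$ are handled by direct inspection.

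The substance of the proof lies in the Lie type case. For $S$ classical with natural module of dimension $r$ over $\mathbb{F}_q$, Aschbacher's theorem partitions the maximal subgroups of $S$ into geometric classes $\mathcal{C}_1, \ldots, \mathcal{C}_8$ together with a class $\mathcal{S}$ of irreducible almost simple subgroups. The standard actions in Definition \ref{d:std} correspond precisely to the subspace stabilizers from $\mathcal{C}_1$, together with the $\mathcal{C}_8$ orthogonal subgroups of $\mathrm{Sp}_{2d}(q)$ in even characteristic. For every remaining maximal subgroup $M$, Liebeck's index bound (derived from the classification of maximal subgroups recorded in \cite{KL}) supplies an estimate of the shape $|M| \leqs q^{3r}$ up to small corrections, while $|S| \geqs q^{r^2/c}$ for an absolute constant $c$. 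Since $|H| \leqs |\mathrm{Aut}(S)| \leqs q^{r^2 + O(r)}$, this yields $|H| < m^5$ once $r$ exceeds a fixed threshold. For $S$ exceptional of Lie type the rank is bounded, and the analogous Liebeck--Seitz bound on the orders of non-parabolic maximal subgroups, combined with the known lower bound on $|S|$, gives the conclusion at once.

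The main obstacle is the arithmetic bookkeeping at small rank. The uniform estimates $|M| \leqs q^{3r}$ and $|S| \geqs q^{r^2/c}$ are loose enough that the argument above does not close for small $r$, so one must either sharpen the index bound case-by-case through the Aschbacher classes $\mathcal{C}_i$ and $\mathcal{S}$, or appeal to the explicit tables of maximal subgroups in \cite{KL} and \cite{Atlas} to dispose of the residual cases by hand. The clean exponent $5$ in the final statement reflects a tight trade-off between the quadratic growth of $\log |S|$ in $r$ and the linear growth of $\log |M|$ in $r$; locating precisely where this trade-off is tight is the finicky part of the argument.
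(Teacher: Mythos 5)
The paper does not actually prove this proposition: it is quoted verbatim from Liebeck and Saxl \cite[Proposition 2]{LSa2}, so the ``official'' proof here is a citation, and your task amounts to reconstructing the argument of that paper. Your outline does follow the genuine strategy of \cite{LSa2} (and of Liebeck's earlier $|H|<m^{9}$ bound): split by socle type, use the Praeger--Saxl bound $|M|<4^{k}$ for primitive subgroups of $A_k$ in the alternating case, and combine Aschbacher's theorem with order bounds on maximal subgroups in the Lie type case. The sporadic, alternating and exceptional branches are essentially sound as sketched (for exceptional $S$ the minimal permutation degree alone already gives $|{\rm Aut}(S)|<P(S)^{5}$, though only barely for $E_6$ and $E_7$).

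There are, however, two genuine gaps in the classical branch. First, the assertion that every non-standard maximal subgroup $M$ satisfies $|M|\leqs q^{3r}$ ``up to small corrections'' is not correct: Liebeck's $q^{3r}$ bound applies only to the irreducible almost simple subgroups in the class $\mathcal{S}$ of Aschbacher's theorem, whereas the geometric subgroups in classes $\mathcal{C}_2$, $\mathcal{C}_3$, $\mathcal{C}_5$ and $\mathcal{C}_8$ (imprimitive, field-extension, subfield and form stabilizers) have orders of magnitude roughly $q^{r^2/2}$, which is not a small correction to $q^{3r}$ when $r$ is large. These subgroups do satisfy the inequality $|K|^{5}<|H|^{4}$ that the statement requires, but verifying this needs the explicit order formulas for each geometric class (as tabulated in \cite{KL}), not the $q^{3r}$ bound you invoke. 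Second, and more seriously, you explicitly defer the low-rank and small-field analysis, which is precisely where the exponent $5$ and the threshold $m\geqs 25$ are earned; without carrying it out, what you have established is only that $|H|<m^{5}$ for all sufficiently large parameters, i.e.\ an asymptotic version of the proposition. As written, the proposal is a correct high-level plan consistent with the literature, but not a complete proof of the stated result.
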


With this proposition in hand, we can very quickly reduce the problem to the groups where $H$ is standard.

\begin{prop}\label{p:as2}
If $m>600$ and $H$ is non-standard, then ${\rm ifix}(T)>n^{4/9}$.
\end{prop}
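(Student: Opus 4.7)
The plan is to derive this proposition as a quick consequence of Proposition~\ref{p:bd} together with Lemma~\ref{l:cor}. The key observation is that once $H$ is non-standard, the Liebeck--Saxl bound $|H| < m^5$ is far stronger than what is needed to make $|H_0|^{100} < |T|$ when $m$ is large.

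First I would check that $|H_0|$ is even so that Lemma~\ref{l:cor} is applicable. Since $H$ is almost simple with non-abelian simple socle $S$, the Feit--Thompson theorem gives $|S|$ even; as $S$ is simple, either $S \leqs T$ or $S \cap T = 1$, and the latter is impossible because $S/(S \cap T)$ embeds in $S_m/A_m = C_2$. Hence $S \leqs T$, giving $|H_0| \geqs |S|$ and in particular $|H_0|$ even.

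Next I would apply Proposition~\ref{p:bd}, whose hypotheses are satisfied since $m > 600 \geqs 25$ and $H$ is non-standard: this gives $|H_0| \leqs |H| < m^5$, so $|H_0|^{100} < m^{500}$. By Lemma~\ref{l:cor}, it then suffices to verify
\[
2 m^{500} < m!
\]
for all $m > 600$. This is a routine Stirling-type estimate: using $\log m! \geqs m\log m - m + \tfrac{1}{2}\log(2\pi m)$, one checks directly at $m = 601$ that $\log m! - 500\log m - \log 2 > 0$, and the ratio $(m+1)!/(m+1)^{500}$ divided by $m!/m^{500}$ equals $(m+1)(1 - 1/(m+1))^{500}$, which for $m \geqs 600$ is at least $(m+1)e^{-5/6} > 1$, so the inequality propagates to all larger $m$.

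The only step requiring real care is the base case $m = 601$ of this elementary estimate, but the slack is generous (several orders of magnitude in the logarithm), so no refined analysis is needed. With $|H_0|^{100} < m^{500} < |T|$ established, Lemma~\ref{l:cor} delivers ${\rm ifix}(T) > n^{4/9}$ and the proof is complete.
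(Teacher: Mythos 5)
Your proposal is correct and follows essentially the same route as the paper: apply Proposition~\ref{p:bd} to get $|H_0| < m^5$, verify $2m^{500} < m!$ for $m > 600$ so that $|H_0|^{100} < |T|$, and conclude via Lemma~\ref{l:cor}. The only additions are details the paper leaves implicit (the evenness of $|H_0|$, which indeed follows since the non-abelian simple socle $S$ must lie in $T$, and the explicit Stirling estimate), both of which check out.
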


\begin{proof}
Here $|H|<m^5$ by Proposition \ref{p:bd} and one can check that
$2m^{500} < m!$ (since $m>600$). Therefore \eqref{e:T} holds and the result follows.
\end{proof}

\begin{prop}\label{p:as3}
If $m>600$, $H$ is standard and $S$ is alternating, then ${\rm ifix}(T)>n^{4/9}$.
\end{prop}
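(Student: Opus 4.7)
Since $S = A_k$ is alternating and $H$ acts standardly and primitively on $\{1, \ldots, m\}$, either $m = \binom{k}{\ell}$ with $2 \leq \ell < k/2$ (the subset action; the case $\ell = 1$ is excluded since it would force $S = T$), or $m = k!/((s!)^r r!)$ with $k = rs$, $r, s \geq 2$ (the partition action). In either case $H$ embeds in $S_k$ via the defining action, so $|H_0| \leq |{\rm Aut}(A_k)| \leq 2 k!$, and $|H_0| \leq k!$ whenever $k \neq 6$ (the case $k = 6$ gives $m \leq 20$ and is covered by Proposition \ref{p:as1}).

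The primary plan is to apply Lemma \ref{l:cor}, i.e.\ to verify the inequality $(k!)^{100} < |T| = m!/2$. Using Stirling's approximation this reduces to $m \log m$ dominating $100\, k \log k$ up to lower-order terms. The resulting bound is amply satisfied in the partition case (where $m$ grows at least exponentially in $k$) and in the subset case with $\ell \geq 3$ (where $m \geq \binom{k}{3} = \Omega(k^3)$). For the subset case with $\ell = 2$ one has $m = k(k-1)/2$ and the comparison is tight; a short direct calculation shows that Lemma \ref{l:cor} applies once $k$ exceeds some explicit threshold $k_0$ (approximately $100$). Combined with $m > 600$, this leaves only the finite list of potential exceptions with $\ell = 2$ and $36 \leq k < k_0$.

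For this finite list I argue directly. Set $t_1 = (1,2)(3,4) \in A_k \leq H_0$; a combinatorial count exactly as in Proposition \ref{p:intrans} shows that the action of $t_1$ on the $2$-subsets of $\{1, \ldots, k\}$ has cycle-shape $(2^{2k-6}, 1^{F})$, where $F = \binom{k-4}{2} + 2$. In particular $t_1 \in T$ and
\[
|t_1^T| = \frac{m!}{F!\,(2k-6)!\,2^{2k-6}}.
\]
Since $|t_1^T \cap H_0| \geq 1$, identity \eqref{e:fix} together with the bound $n \geq m!/(2 k!)$ reduces the target ${\rm fix}(t_1) > n^{4/9}$ to the single-variable inequality $|t_1^T| < n^{5/9}$, which I verify over the finite range $36 \leq k < k_0$ either by Stirling-style estimates or by a short {\sc Magma} check. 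The underlying asymptotics are very favourable: $\log |t_1^T| = \Theta(k \log k)$ while $\log n = \Theta(k^2 \log k)$, so the ratio $\log|t_1^T|/\log n$ falls below $5/9$ already for $k \geq 11$, giving comfortable room throughout the problematic interval.

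The main obstacle is simply the bookkeeping in the $\ell = 2$ case, where the margin in Lemma \ref{l:cor} is slim near $k = k_0$ and the crossover to the direct involution argument must be made explicit. Since the two techniques have a large overlap (the direct argument works asymptotically for all $k \geq 11$), the precise crossover point is a matter of presentation rather than substance.
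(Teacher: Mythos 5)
Your proposal is correct and follows essentially the same route as the paper: the same subset/partition dichotomy, the bound $(k!)^{100} < |T|$ fed into Lemma \ref{l:cor} for everything except the $2$-subset action with $k$ below an explicit threshold, and a direct involution fixed-point count in the residual range. The only divergence is in that residual range, where the paper invokes Lemma \ref{l:basic} with $\b$ taken from the largest involution class of $A_m$ and checks $5-5\a-9\b>0$ numerically for $36 \leqs k \leqs 97$, whereas you use the exact $T$-class size of $(1,2)(3,4)$ acting on $2$-subsets --- a slightly sharper variant of the same estimate ${\rm fix}(t) \geqs n/|t^T|$ which, as you observe, would on its own cover all $k \geqs 11$.
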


\begin{proof}
Write $S=A_k$ and first assume that the embedding of $H$ in $G$ is afforded by the action of $H$ on the set of $\ell$-element subsets of $\{1, \ldots, k\}$, so $m = \binom{k}{\ell}$ and $2 \leqs \ell < k/2$. Note that $k \geqs 12$ since $m>600$. Now $m \geqs \binom{k}{2} = \frac{1}{2}k(k-1)$ and it is straightforward to check that 
\[
|H_0|^{100} \leqs (k!)^{100} < \frac{1}{2}\left(\frac{1}{2}k(k-1)\right)! \leqs |T|
\]
for all $k \geqs 98$. Similarly, if $k \leqs 97$ and $\ell \geqs 3$ then $m \geqs \binom{k}{3}$ and one checks that \eqref{e:T} holds, so we may assume that $\ell=2$, $36 \leqs k \leqs 97$ and $m = \frac{1}{2}k(k-1)$. Here we compute
\begin{equation}\label{e:al}
\a = \frac{\log k!}{\log |T|},\;\; \b = \frac{\log \gamma}{\log |T|},
\end{equation}
where 
\[
\gamma = \max\left\{\frac{m!}{2^{2j}(2j)!(m-4j)!}\,:\, 1 \leqs j \leqs m/4\right\}
\]
is the size of the largest conjugacy class of involutions in $T$. One checks that $5-5\a-9\b>0$ in each case, whence ${\rm ifix}(T)>n^{4/9}$ by Lemma \ref{l:basic}.

Now assume that the embedding of $H$ corresponds to the action on the set of partitions of $\{1, \ldots, k\}$ into $r$ subsets of size $\ell$, where $1<\ell<k$. Here $m = \frac{k!}{(\ell!)^rr!}$ and the condition $m>600$ implies that $k \geqs 10$. It is easy to check that $m \geqs \binom{k}{4}$ and by arguing as in the previous paragraph we deduce that \eqref{e:T} holds if $k \geqs 12$. The same bound also holds when $k=10$ since $r=5$, $\ell=2$ and $m=945$. 
\end{proof}

In order to complete the proof of Theorem \ref{t:main} for $T = A_m$ we may assume that $m>600$ and $H$ is an almost simple classical group over $\mathbb{F}_q$ with socle $S$. Let $V$ be the natural module for $S$ and set $\ell = \dim V$. In view of Proposition \ref{p:as2}, we may also assume that $H \leqs {\rm Sym}(\Gamma)$ is a \emph{standard} group, which means that $\Gamma$ is either a set of subspaces (or pairs of subspaces) of $V$, or $S = {\rm Sp}_{\ell}(q)$, $q$ is even and $\Gamma$ is the set of cosets of a subgroup ${\rm O}_{\ell}^{\pm}(q)$ of $S$ (see Definition \ref{d:std}). Let $K$ be a point stabilizer for the action of $H$ on $\Gamma$, so $m = |H:K|$.

\begin{rem}\label{r:cond}
Due to the existence of a number of exceptional isomorphisms among the low dimensional classical groups, we may assume that $S$ is one of the following:
\[
{\rm L}_{\ell}(q),\, \ell \geqs 2; \; {\rm U}_{\ell}(q), \, \ell \geqs 3; \; {\rm PSp}_{\ell}(q), \, \ell \geqs 4;\; {\rm P\O}_{\ell}^{\e}(q),\, \ell \geqs 7.
\]
In addition, in view of the isomorphisms
\[
{\rm L}_{2}(4) \cong {\rm L}_{2}(5) \cong A_5, \; {\rm L}_{2}(9) \cong {\rm PSp}_{4}(2)' \cong A_6,\; 
{\rm L}_{3}(2) \cong {\rm L}_{2}(7),\; {\rm L}_{4}(2) \cong A_8,\; {\rm PSp}_{4}(3) \cong {\rm U}_{4}(2)
\]
(see \cite[Proposition 2.9.1]{KL}), we may assume that 
\[
S \ne {\rm L}_{2}(4), \, {\rm L}_{2}(5), \, {\rm L}_{2}(9), \, {\rm L}_{3}(2), \, {\rm L}_{4}(2), \, {\rm PSp}_{4}(2)', \, {\rm PSp}_{4}(3).
\]

\end{rem}

\begin{prop}\label{p:as4}
If $m>600$, $H$ is standard and $S$ is classical, then ${\rm ifix}(T)>n^{4/9}$.
\end{prop}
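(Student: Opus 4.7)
The plan is to invoke Lemma~\ref{l:cor}: since $S$ is a non-abelian classical simple group, $|H_0|$ is certainly even, so it is enough to establish the bound $|H_0|^{100} < |T|$. I first record the uniform estimate $|H_0| \leqs |\Aut(S)| \leqs 2\ell |S| \log_2 q$, together with the crude bound $|S| < q^{\ell^2}$, so that $\log|H_0| \leqs c\,\ell^2 \log q$ for an absolute constant $c$. On the other hand, Stirling gives $\log|T| \geqs (m-1)\log m - m$, and thus it suffices to establish
\[
100\, c\, \ell^2 \log q \leqs (m-1)\log m - m,
\]
which is verified in each case by combining with an explicit lower bound on $m$.

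I then work through the three action types of Definition~\ref{d:std}. For subspace actions (case (i)) the lower bound $m \geqs (q^{\ell}-1)/(q-1) \geqs q^{\ell-1}$ holds when $S = {\rm L}_\ell(q)$, with analogous bounds $m \geqs q^{c(\ell)}$ in the unitary, symplectic and orthogonal cases (where $c(\ell)$ reflects the Witt index); pair-of-subspace actions give even larger $m$, so pose no additional difficulty. For case (iii), when $S = {\rm Sp}_{2d}(q)$ acts on cosets of ${\rm O}^{\pm}_{2d}(q)$, one has $m = q^{d}(q^d \pm 1)/2$. In each of these regimes the asymptotic inequality above holds once $\max(\ell,q)$ is sufficiently large, and the hypothesis $m > 600$ reduces the remaining work to a short, explicit list of small $(\ell,q)$-pairs.

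Each residual case can be dispatched either by a direct \textsc{Magma} computation in the spirit of Proposition~\ref{p:as1}, using the database of primitive groups to compute $\alpha(H_0) = \max\{|t^{H_0}|/|t^T| : t \in H_0 \text{ an involution}\}$ and confirming $\alpha(H_0) > n^{-5/9}$, or alternatively by invoking Lemma~\ref{l:basic} with the class-size bound $|t^T| \leqs |T|^{11/20}$ from Lemma~\ref{l:inv}. The latter replaces the one-hundredth-power inequality by the weaker numerical condition $5 - 5\alpha - 9\beta > 0$ with $\beta \leqs 11/20$, which is satisfied provided $|H_0| \leqs |T|^{9/100}$, a condition easy to check for the handful of leftover cases.

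The main obstacle will be the narrowest subfamily, namely the two-dimensional linear groups ${\rm L}_2(q)$ acting on the $q+1$ points of the projective line. Here $m = q+1$, so $m > 600$ forces $q \geqs 600$, and the ratio $|H_0|/m$ is of order $q^2 \log q$, which among all standard actions comes closest to saturating the estimate required by Lemma~\ref{l:cor}. Verifying $|H_0|^{100} < |T|$ in this family therefore demands the most careful application of Stirling, exploiting the fact that $\log(q+1)! \sim q \log q$ decisively dominates $100 \log|H_0| = O(\log q)$ once $q$ is moderately large; a similar but less delicate analysis handles ${\rm L}_3(q)$ on points and the other low-dimensional families where the generic asymptotic is tightest.
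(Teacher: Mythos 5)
Your overall strategy --- reduce to the inequality $|H_0|^{100}<|T|$ of Lemma~\ref{l:cor} using $|H_0|\leqs|\Aut(S)|$ together with a lower bound on $m$ coming from the subspace action, with Lemma~\ref{l:basic} as a fallback for residual cases --- is the same as the paper's. But there is a genuine gap, and you have misidentified where the difficulty lies. For a subspace action one has $|H|\leqs|\Aut(S)|<2q^{\ell^2}$ while $m$ can be as small as roughly $q^{\ell-2}$, so $|H|$ is of order $m^{\ell+O(1)}$: the tight cases are those with \emph{large} $\ell$ and \emph{small} $q$, since the hypothesis $m>600$ does not force $q$ to be large when $\ell$ is large. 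The family ${\rm L}_{2}(q)$ on the projective line, which you single out as the main obstacle, is in fact routine: there $|H|\leqs q(q^2-1)\log_p q$ with $q\geqs 600$, so $|H|^{100}$ is roughly $q^{310}$ against $|T|=\frac{1}{2}(q+1)!\approx q^{500}$. The genuinely critical case is $H={\rm L}_{10}(2)$ acting on the $m=2^{10}-1=1023$ points of projective space: here $\log_2|H|\approx 98$ while $\log_2|T|\approx 8758$, so $|H|^{100}>|T|$ and the inequality \eqref{e:T} simply \emph{fails}. Your argument asserts that the hundredth-power inequality can be verified in every case, so it breaks down at precisely the one exception.

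Your proposed fallback does not repair this. With $\b=11/20$ from Lemma~\ref{l:inv}, the condition $5-5\a-9\b>0$ of Lemma~\ref{l:basic} forces $\a<1/100$, i.e.\ exactly $|H_0|^{100}<|T|$ again --- not $|H_0|\leqs|T|^{9/100}$ as you claim (that weaker hypothesis would require $\b<0.506$) --- so it is no weaker than the bound that has just failed. Indeed, for ${\rm L}_{10}(2)$ one has $\a\approx 0.0112$ and $5-5\a-9\cdot\frac{11}{20}\approx -0.006<0$. The paper circumvents this by computing $\b$ \emph{exactly} for $T=A_{1023}$, namely $\b=\log\gamma/\log|T|$ where $\gamma$ is the size of the largest involution class in $T$ as in \eqref{e:al}; this gives $\b\approx 0.505$, which is just enough to make $5-5\a-9\b>0$. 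Your alternative of a direct {\sc Magma} verification would also work for this degree, but only if the case ${\rm L}_{10}(2)$ on $1023$ points is first identified as one where the generic bounds fail, which your write-up does not do.
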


\begin{proof}
We adopt the set-up introduced above, including the conditions on $S$ presented in Remark \ref{r:cond}.  Write $q=p^f$, where $p$ is a prime. We will prove that \eqref{e:T} holds unless $(H,m) = ({\rm L}_{10}(2), 2^{10}-1)$.

Since $m = |S:S \cap K|$ it follows that $m \geqs P(S)$, where $P(S)$ is the minimal degree of a nontrivial permutation representation of $S$. The minimal degrees are presented in \cite[Table 4]{GMPS} (which corrects a couple of slight errors in \cite[Table 5.2.A]{KL}) and by inspection we deduce that $m > q^{\ell-2}$. Similarly, the order of ${\rm Aut}(S)$ is recorded in \cite[Table 5.1.A]{KL} and it is easy to see that $|H| \leqs |{\rm Aut}(S)|<2q^{\ell^2}$. 

If $S = {\rm L}_{2}(q)$ then $m > \max\{600,q\}$, $|H| \leqs q(q^2-1) \log_pq$ and it is routine to verify the bound in \eqref{e:T}. Similarly, if $\ell=3$ then $S = {\rm L}_{3}^{\e}(q)$, $m > \max\{600,q^2+q\}$, 
\[
|H| \leqs |{\rm Aut}({\rm U}_{3}(q))| = 2q^3(q^2-1)(q^3+1)\log_pq
\]
and we quickly deduce that \eqref{e:T} holds.  

Now assume $\ell \geqs 4$. If $q \geqs 31$ then one checks that
\begin{equation}\label{e:ell}
2^{101}q^{100\ell^2} < (q^{\ell-2})!
\end{equation}
and this establishes the bound in \eqref{e:T}. More precisely, if $\ell \geqs 5$ then the inequality in \eqref{e:ell} is satisfied unless $(\ell,q)$ is one of the following:
\begin{equation}\label{e:cases}
\ell = 5, \, q \leqs 9; \; \ell=6, \, q \leqs 5; \; \ell=7, \, q \leqs 4; \; \ell=8, \, q \leqs 3; \; \ell=9,10,11,12, \, q = 2.
\end{equation}

Suppose $\ell=4$ and $q \leqs 29$. If $S = {\rm PSp}_{4}(q)$ then $q \geqs 4$ (see Remark \ref{r:cond}), $m > \max\{600,q^2\}$ and
\[
|H| \leqs |{\rm Aut}({\rm PSp}_{4}(q))| \leqs 2q^4(q^2-1)(q^4-1)\log_pq,
\]
which implies that \eqref{e:T} holds. Now assume $S = {\rm L}_{4}^{\e}(q)$, so 
\[
|H| \leqs |{\rm Aut}({\rm U}_{4}(q))| \leqs 2q^6(q^2-1)(q^3+1)(q^4-1)\log_pq.
\]
If $m> \max\{600,q^4\}$ then \eqref{e:T} holds, so let us assume $600 <  m \leqs q^4$. By inspecting \cite[Table 4.1.2]{BG}, which records the degree of every standard classical group, we deduce that $S = {\rm L}_{4}(q)$ and $m=(q^4-1)/(q-1)$ is the only possibility, so $q \geqs 9$ and we get $|{\rm Aut}(S)|^{100}<|T|$.

Very similar reasoning establishes the bound in \eqref{e:T} for all the remaining cases in \eqref{e:cases} with $\ell \leqs 9$, so to complete the proof we may assume that $\ell \in \{10,11,12\}$ and $q=2$. If $\ell \in \{11,12\}$ and $S = {\rm L}_{\ell}^{\e}(2)$ then $m \geqs 2^{\ell}-1$ and we deduce that \eqref{e:T} holds. Similarly, if $\ell=12$ and $S \ne {\rm L}_{12}^{\e}(2)$ then the bound $m > 2^{10}$ is sufficient. Finally, let us assume 
$\ell=10$. If $S \ne {\rm L}_{10}^{\e}(2)$ then $|H| \leqs |{\rm Sp}_{10}(2)|$ and one checks that the condition $m>600$ implies that $m \geqs 2^{10}-1$, which allows us to verify the bound in \eqref{e:T}. Now assume $S = {\rm L}_{10}^{\e}(2)$, so $|H| \leqs 2|{\rm U}_{10}(2)|$. If $m>2^{11}$ then $|H|^{100}<|T|$, so let us assume $m \leqs 2^{11}$, in which case $H = {\rm L}_{10}(2)$ and $m = 2^{10}-1$ (so $\Gamma$ is the set $1$-dimensional subspaces of $V$). Here $|H|^{100} > |T|$, but if we define $\a = \log |H| /\log |T|$ and $\b$ as in \eqref{e:al}, then it is easy to check that $5-5\a-9\b>0$ and thus ${\rm ifix}(T)>n^{4/9}$ by Lemma \ref{l:basic}. 
\end{proof}

\section{Sporadic groups}\label{s:spor}

In this final section we complete the proof of Theorem \ref{t:main} by handling the groups with socle a sporadic simple group. Our first result quickly reduces the problem to the Baby Monster and Monster (denoted by $\mathbb{B}$ and $\mathbb{M}$, respectively).

\begin{prop}\label{l:spor1}
The conclusion to Theorem \ref{t:main} holds if $T \ne \mathbb{B}, \mathbb{M}$ is a sporadic group.
\end{prop}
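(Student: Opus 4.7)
The proof will be a systematic, computation-driven case analysis based on the \textsf{GAP} Character Table Library \cite{GAPCTL}. The plan is as follows: for each sporadic simple group $T \ne \mathbb{B}, \mathbb{M}$ and each almost simple overgroup $G$ with socle $T$, I would run through a complete set of representatives of the conjugacy classes of maximal subgroups $H$ of $G$. This list is available in the literature (and largely encoded in the Library) for every sporadic group other than $\mathbb{M}$, whose maximal subgroup classification is still incomplete; this is exactly why $\mathbb{M}$, together with $\mathbb{B}$ (where some of the character-theoretic data needed below is likewise inconvenient), must be treated separately. For each $H$ set $H_0 = H \cap T$ and $n = |T:H_0|$; if $|H_0|$ is odd then $(T,H_0)$ is one of the pairs listed in Remark \ref{r:main}(a) and we are in conclusion (ii) of Theorem \ref{t:main}, so we may assume $H_0$ contains at least one involution.

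To compute ${\rm ifix}(T)$ one uses the formula \eqref{e:fix}, namely ${\rm fix}(t) = |t^T \cap H_0|\cdot n/|t^T|$. The key input is the class fusion map $H_0 \to T$, which is stored in the Library for essentially every maximal subgroup of every sporadic simple group below $\mathbb{B}$. Given this fusion together with the character tables of $T$ and $H_0$, for each involution class $t^T$ of $T$ one obtains $|t^T \cap H_0|$ as the sum of the sizes of those $H_0$-classes that fuse into $t^T$; taking the maximum over the handful of involution classes of $T$ gives ${\rm ifix}(T)$. One then compares with $n^{4/9}$ and cross-checks against Table \ref{tab:main} to determine which clause of Theorem \ref{t:main} the pair $(T,H_0)$ belongs to. For the almost simple extensions $G > T$ one only needs to redo this check on those maximal subgroups $H$ of $G$ which are genuinely new (i.e.\ not of the form $N_G(H_0)$ for an $H_0$ already treated inside $T$), since the bound ${\rm ifix}(T)$ is an invariant of $T$ and $H_0$ alone.

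The main obstacle here is not conceptual but logistical: the number of pairs $(G,H)$ is large, particularly for groups such as ${\rm Co}_1$, ${\rm Fi}_{23}$ and ${\rm Fi}_{24}'$, each of which has several dozen classes of maximal subgroups, and for each one must identify precisely which $T$-classes of involutions are met by $H_0$ and with what multiplicity. A secondary and minor difficulty is that, in the occasional case where the relevant class fusion is not stored in the Library, one must either reconstruct it from element orders and power maps (the datum needed for \eqref{e:fix} depends only on class sizes, so any ambiguity coming from table automorphisms that permute involution classes of the same size is harmless), or else compute $|t^T \cap H_0|$ directly in \textsc{Magma} using a faithful matrix or permutation representation of $G$. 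Once every $(T,G,H)$ has been processed in this uniform way, the list of pairs with ${\rm ifix}(T) \leq n^{4/9}$ agrees with the rows of Table \ref{tab:main} whose $T$ is sporadic and distinct from $\mathbb{B}, \mathbb{M}$, which is exactly the content of the proposition.
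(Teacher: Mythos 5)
Your proposal is correct and follows essentially the same route as the paper: for each maximal subgroup $H$ of $G$, use the character tables and stored class fusions in the \textsf{GAP} Character Table Library to evaluate $|t^T \cap H_0|$ and $|t^T|$ for each involution class, compute ${\rm fix}(t)$ via \eqref{e:fix}, and compare ${\rm ifix}(T)$ with $n^{4/9}$. The paper's proof is just a terser version of this (and does not need your fallback for missing fusion maps here, since that issue only arises for $\mathbb{B}$ and $\mathbb{M}$, which are treated separately).
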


\begin{proof}
This is an easy computation using the \textsf{GAP} Character Table Library \cite{GAPCTL}. In each case, the character table of $G$ is available in \cite{GAPCTL} and we use the \texttt{Maxes} function to access the character table of the maximal subgroup $H$. In addition, \cite{GAPCTL} stores the fusion map from $H$-classes to $G$-classes, which allows us to compute ${\rm fix}(t)$ via \eqref{e:fix} for all $t \in G$. In particular, we can compute ${\rm ifix}(T)$ precisely and the result follows.
\end{proof}

To complete the proof of Theorem \ref{t:main}, we may assume that $T = \mathbb{B}$ or $\mathbb{M}$. In both cases, we claim that ${\rm ifix}(T)>n^{4/9}$.

\begin{prop}\label{l:spor2}
The conclusion to Theorem \ref{t:main} holds if $T  = \mathbb{B}$.
\end{prop}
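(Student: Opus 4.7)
The list of maximal subgroups of $\mathbb{B}$ is known (by work of Wilson and others), so the analysis splits according to this list. Exactly one class, namely $H_0 = 47{:}23$, has odd order; this contributes to case (ii) of Theorem~\ref{t:main} and has ${\rm ifix}(T) = 0$ as recorded in Remark~\ref{r:main}(a). For each of the remaining maximal subgroups $H$ the goal is to show ${\rm ifix}(T) > n^{4/9}$.

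My first step is to handle every $H$ whose character table and class fusion into $\mathbb{B}$ are stored in the GAP Character Table Library \cite{GAPCTL}. For each such $H$ I would repeat the computation of Proposition~\ref{l:spor1}: use the stored fusion data to compute $|t^T \cap H_0|$ for every involution class of $H_0$, read off $|t^T|$ from the character table of $\mathbb{B}$, and then evaluate ${\rm fix}(t)$ via \eqref{e:fix}. Maximising over the involution classes of $H_0$ determines ${\rm ifix}(T)$ exactly, and the inequality ${\rm ifix}(T) > n^{4/9}$ is then verified arithmetically, using $n = |T{:}H_0|$. Because $|T|$ is so large and the smallest degrees $n = |T{:}H_0|$ are already enormous, this step comfortably succeeds for all subgroups whose library data is complete.

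The real work lies with those subgroups for which the full fusion information is not stored in \cite{GAPCTL} --- in practice the very large $2$-local subgroups such as $2.{}^2E_6(2){:}2$ and $2^{1+22}.{\rm Co}_2$, which are the centralizers in $\mathbb{B}$ of involutions of classes $2A$ and $2B$ respectively. For each such $H_0 = C_T(z)$ I would simply take $t = z$; then $|t^T| = |T{:}H_0| = n$, so \eqref{e:fix} collapses to ${\rm fix}(t) = |z^T \cap C_T(z)|$, the number of $T$-conjugates of $z$ commuting with $z$. This count is encoded in the class multiplication coefficients of $\mathbb{B}$, which can be recovered from the character table of $T$ in \cite{GAPCTL}, so no fusion map from $H$ is required. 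For any other maximal $H$ whose table is not in the library, I would instead take $t$ to be a central involution of a Sylow $2$-subgroup of $H_0$, identify its $T$-class using the power maps of $\mathbb{B}$, and combine this with the trivial estimate $|t^T \cap H_0| \geq 1$, which is already enough because $n$ is extremely large. The main obstacle is ensuring that in each of the large centralizer-type cases the relevant power-map and structure-constant data can indeed be extracted from \cite{GAPCTL}; once such an involution is located in each problematic $H_0$, the bound ${\rm ifix}(T) > n^{4/9}$ drops out from a routine numerical comparison.
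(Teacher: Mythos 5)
Your overall strategy (work through the known list of maximal subgroups of $\mathbb{B}$ using the \textsf{GAP} Character Table Library, with the stored fusion maps feeding into \eqref{e:fix}) is the same as the paper's, and your observation that $H_0 = 47{:}23$ is the unique odd-order case is correct. However, you have misidentified where the difficulty lies, and your fallback argument fails at exactly the one point where it is needed. The fusion maps for the large $2$-local subgroups $2.{}^2E_6(2){:}2$ and $2^{1+22}.{\rm Co}_2$ \emph{are} stored in the library, so no special treatment is required there (though your structure-constant computation of $|z^T \cap C_T(z)|$ for $H_0 = C_T(z)$ is mathematically sound and would work as an alternative). The single case in which the fusion map is not stored is $H = (2^2 \times F_4(2)).2$, which is not the centralizer of an involution, so it falls to your third, catch-all device: pick an involution $t \in H_0$, identify its $T$-class, and use $|t^T \cap H_0| \geqs 1$.

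That trivial bound is numerically insufficient here. One has $|H_0| = 8|F_4(2)| \approx 2.65 \times 10^{16}$, so $n = |T{:}H_0| \approx 1.57 \times 10^{17}$ and $n^{4/9} \approx 4.4 \times 10^{7}$; on the other hand the smallest class of involutions in $\mathbb{B}$ is \texttt{2A}, of size $|T|/|2.{}^2E_6(2){:}2| \approx 1.36 \times 10^{10}$, so the best the trivial estimate can give is ${\rm fix}(t) \geqs n/|t^T| \approx 1.2 \times 10^{7} < n^{4/9}$ (and the other involution classes are larger, so they do worse). You genuinely need nontrivial lower bounds on $|t^T \cap H_0|$, i.e.\ some fusion information. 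The paper resolves this by running \texttt{PossibleClassFusions} to enumerate all $64$ candidate fusion maps from $(2^2 \times F_4(2)).2$ into $\mathbb{B}$ and checking that every one of them yields ${\rm ifix}(T) = 1609085288448 > n^{4/9}$. Without this step (or some substitute, e.g.\ a direct count of $T$-conjugates of a \texttt{2A}-involution inside $H_0$), your argument does not close this case.
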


\begin{proof}
Here $G = T$ is the Baby Monster and we proceed as in the proof of the previous proposition, noting that the character tables of $G$ and $H$ are available in \cite{GAPCTL} (as before, we use the \texttt{Maxes} function to access the character table of $H$). In addition, in all but one case, the fusion map from $H$-classes to $G$-classes is also stored and this reduces the analysis to the case $H = (2^2 \times F_4(2)).2$. Here we use the function \texttt{PossibleClassFusions} to determine a set of candidate fusion maps (there are $64$ such maps in total) and for each possibility one checks that ${\rm ifix}(T) = 1609085288448 > n^{4/9}$. 
\end{proof}

\begin{prop}\label{l:spor3}
The conclusion to Theorem \ref{t:main} holds if $T  = \mathbb{M}$.
\end{prop}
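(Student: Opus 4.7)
The plan is to proceed exactly as in Proposition \ref{l:spor2}, applying the formula \eqref{e:fix} to each conjugacy class of maximal subgroups $H$ of $\mathbb{M}$ in turn. Since $\mathbb{M}$ is simple with trivial outer automorphism group we have $G = T = \mathbb{M}$ and $H_0 = H$, and for each such $H$ it suffices to exhibit a single involution $t \in H$ with ${\rm fix}(t) > n^{4/9}$, where $n = |\mathbb{M}:H|$.

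For every maximal subgroup $H$ whose character table is stored in the \textsf{GAP} Character Table Library, the computation is direct: I would use \texttt{Maxes} (where applicable) to retrieve the character table of $H$ together with the stored class fusion from $H$ to $\mathbb{M}$, identify the conjugacy classes of involutions in $H$, and compute ${\rm fix}(t)$ via \eqref{e:fix}. Taking the maximum over involution classes yields ${\rm ifix}(\mathbb{M})$ exactly, and one then checks that it exceeds $n^{4/9}$. For maximal subgroups where the character table is available but the class fusion is not stored, I would imitate the workaround in Proposition \ref{l:spor2} and invoke \texttt{PossibleClassFusions} to enumerate the finite list of compatible fusion maps; it will then suffice to verify that each candidate produces a value exceeding $n^{4/9}$, since the true fusion is among them.

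For any remaining maximal $H$ whose character table is not in \cite{GAPCTL}, a structural fallback is available: $\mathbb{M}$ has exactly two classes of involutions, $2A$ and $2B$, with centralisers $2{\cdot}\mathbb{B}$ and $2^{1+24}{\cdot}\mathrm{Co}_1$ of explicitly known order, so for any involution $t \in H$ the trivial bound $|t^{\mathbb{M}}\cap H| \geqs 1$ gives
\[
{\rm fix}(t) \,\geqs\, \frac{n}{|t^{\mathbb{M}}|} \,\geqs\, \frac{n}{\max\bigl(|2A^{\mathbb{M}}|,\,|2B^{\mathbb{M}}|\bigr)},
\]
which suffices whenever $n$ is large relative to these class sizes; since the smallest index of a maximal subgroup of $\mathbb{M}$ is $|\mathbb{M}:2{\cdot}\mathbb{B}|$, this reduction covers most cases.

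The principal obstacle is not conceptual but organisational: $\mathbb{M}$ has a long and heterogeneous list of maximal subgroups, including several small almost simple subgroups of Lie type (e.g.\ $\mathrm{L}_{2}(q)$ for various primes $q$) whose class fusions to $\mathbb{M}$ can admit many candidates under \texttt{PossibleClassFusions}, and one must ensure that the minimum of ${\rm ifix}(\mathbb{M})$ over all such candidates still exceeds $n^{4/9}$. The delicate step is to confirm that every maximal subgroup class is either directly computable from \cite{GAPCTL}, disposed of by the enumeration of possible fusions, or handled by the structural fallback above, so that no gap remains in the case analysis.
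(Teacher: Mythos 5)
Your overall strategy is the same as the paper's: work through the maximal subgroups of $\mathbb{M}$ computationally via \eqref{e:fix}, using stored class fusions where available, \texttt{PossibleClassFusions} where the fusion is missing, and a crude class-size bound as a fallback. However, there are two concrete gaps. First, your structural fallback is too weak. With only $|t^{\mathbb{M}} \cap H| \geqs 1$ you get ${\rm fix}(t) \geqs n/|\texttt{2B}|$, which exceeds $n^{4/9}$ only when $n > |\texttt{2B}|^{9/5} \approx 10^{50}$; your remark that the smallest index is $|\mathbb{M}:2{\cdot}\mathbb{B}| \approx 10^{20}$ points in the wrong direction, since the fallback fails precisely for subgroups of small index. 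In particular, the classification of maximal subgroups of $\mathbb{M}$ was not complete: any unknown maximal subgroup is almost simple with socle ${\rm L}_2(8)$, ${\rm L}_2(13)$, ${\rm L}_2(16)$ or ${\rm U}_3(4)$, and for socle ${\rm U}_3(4)$ (and ${\rm L}_2(16)$) one has $n \geqs |\mathbb{M}|/|{\rm Aut}(S)| \approx 10^{48}$--$10^{49}$, so the trivial bound does not suffice. The paper repairs this by taking $|t^{\mathbb{M}} \cap H| \geqs \a$ where $\a$ is the size of the largest involution class of the socle $S$ (a full $S$-class lies in $H$), which is exactly the extra factor needed.

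Second, your case analysis assumes every known maximal subgroup is reachable through the character table library, but for the large subgroups the paper collects in $\mathcal{A}$ (e.g.\ $2^{10+16}.\O_{10}^{+}(2)$, with $n \approx 5 \times 10^{32}$) and $\mathcal{B}$ (e.g.\ $3^8.{\rm P\O}_{8}^{-}(3).2$) neither route in your proposal works: the fusions are not stored, \texttt{PossibleClassFusions} is infeasible at that scale, and the trivial bound misses $n^{4/9}$ by many orders of magnitude. The paper needs genuinely different input here: for $\mathcal{A}$ it imports the exact values of $|t^{G} \cap H|$ for $t \in \texttt{2A}$ from \cite[Proposition 3.9]{BOW}, and for $\mathcal{B}$ it computes the largest involution class size $\a$ of $H$ directly in {\sc Magma} from explicit matrix or permutation representations and then uses ${\rm ifix}(T) \geqs n\a|\texttt{2B}|^{-1}$. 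Without some such substitute, your argument leaves these subgroups unhandled.
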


\begin{proof}
Let $G = T = \mathbb{M}$ be the Monster. By inspecting the \textsc{Atlas} \cite{Atlas}, we see that $G$ has two conjugacy classes of involutions, labelled \texttt{2A} and \texttt{2B}, where
\[
|\texttt{2A}| = 97239461142009186000,\;\; |\texttt{2B}| = 5791748068511982636944259375.
\]
As discussed in \cite{Wil}, $G$ has $44$ known conjugacy classes of maximal subgroups and any additional maximal subgroup is almost simple with socle one of ${\rm L}_{2}(8)$, ${\rm L}_{2}(13)$, ${\rm L}_{2}(16)$ or ${\rm U}_{3}(4)$. Let us define the following three collections of known maximal subgroups of $G$:
\begin{align*}
\mathcal{A} & = \{2^{10+16}.\O_{10}^{+}(2),  2^{2+11+22}.({\rm M}_{24} \times S_3), 2^{5+10+20}.(S_3 \times {\rm L}_{5}(2)), 2^{3+6+12+18}.({\rm L}_{3}(2)  \times 3S_6) \} \\
\mathcal{B} & = \{ 3^8.{\rm P\O}_{8}^{-}(3).2, (3^2{:}2 \times {\rm P\O}_{8}^{+}(3)).S_4, 3^{2+5+10}.({\rm M}_{11} \times 2S_4), 3^{3+2+6+6}.({\rm L}_{3}(3) \times {\rm SD}_{16})\} \\
\mathcal{C} & = \{({\rm L}_{2}(11) \times {\rm L}_{2}(11)){:}4, 11^2{:}(5 \times 2A_5), 7^2{:}{\rm SL}_{2}(7), {\rm L}_{2}(29){:}2, {\rm L}_{2}(19){:}2\}
\end{align*}

First assume $H$ belongs to one of the $44$ known classes of maximal subgroups. If $H$ is not contained in $\mathcal{A}$, $\mathcal{B}$ or $\mathcal{C}$, then we use the function \texttt{NamesOfFusionSources} to access the character table of $H$ in \textsf{GAP} and in each case we can work with the stored fusion map from $H$-classes to $G$-classes. This allows us to compute ${\rm ifix}(T)$ as in the proof of Proposition \ref{l:spor1} and it is straightforward to verify the desired bound.

Now assume $H$ is one of the subgroups in $\mathcal{C}$. Here the character table of $H$ is available in \textsf{GAP}, but the fusion map is not stored. So in these cases we proceed as in the proof of Proposition \ref{l:spor2}, using the function \texttt{PossibleClassFusions}. In each case, we find that ${\rm ifix}(T)$ is independent of the choice of fusion map and we calculate that ${\rm ifix}(T)>n^{4/9}$.

Next suppose $H \in \mathcal{A} \cup \mathcal{B}$. If $H \in \mathcal{A}$ then \cite[Proposition 3.9]{BOW} gives $|t^G \cap H|$, where $t$ is contained in the \texttt{2A} class of involutions in $G$. This allows us to compute ${\rm fix}(t)$ precisely and we deduce that ${\rm ifix}(T)>n^{4/9}$. Now assume $H \in \mathcal{B}$ and let $\a$ be the size of the largest conjugacy class of involutions in $H$. We use {\sc Magma} to compute $\a$, working with a representation of $H$ given in \cite{WebAt}. For $H = 3^8.{\rm P\O}_{8}^{-}(3).2$, this is a matrix representation of dimension $204$ over $\mathbb{F}_3$ and we use \texttt{LMGClasses} to compute a set of conjugacy class representatives; in the remaining cases, we work with a permutation representation of degree less than $10^5$. Now, if $\b$ is the size of the \texttt{2B} class of involutions in $G$ (see above),  then ${\rm ifix}(T) \geqs n\a\b^{-1}$ and in each case it is easy to check that this lower bound is greater than $n^{4/9}$. For example, if $H = 3^8.{\rm P\O}_{8}^{-}(3).2$ then $\a = 1982806371$ and the above bound yields ${\rm ifix}(T) > n^{4/9}$.

To complete the proof of the proposition, we may assume $H$ is an almost simple maximal subgroup with socle $S = {\rm L}_{2}(8)$, ${\rm L}_{2}(13)$, ${\rm L}_{2}(16)$ or ${\rm U}_{3}(4)$. Let $\a$ be the size of the largest class of involutions in $S$ and define $\b = |\texttt{2B}|$ as above. Then one checks that
\[
\left(\frac{|T|}{|{\rm Aut}(S)|}\right)^{5/9} > \frac{\b}{\a}
\]
and since $H \leqs {\rm Aut}(S)$, we immediately deduce that ${\rm ifix}(T)>n^{4/9}$ as required.
\end{proof}

\vs

This completes the proof of Theorem \ref{t:main}.


\begin{thebibliography}{999}
  
\bibitem{BPP}
J. Bamberg, T. Popiel and C.E. Praeger, \emph{Simple groups, product actions, and generalized quadrangles}, Nagoya Math. J. \textbf{234} (2019), 87--126.

\bibitem{Bender}
H. Bender, \emph{Transitive Gruppen gerader Ordnung, in denen jede Involution genau einen Punkt festl\"{a}sst}, J. Algebra \textbf{17} (1971), 527--554.

\bibitem{magma} 
W. Bosma, J. Cannon and C. Playoust, \emph{The {\textsc{Magma}} algebra 
system I: The user language}, J. Symb. Comput. \textbf{24} (1997), 235--265.

\bibitem{GAPCTL} 
T. Breuer, \emph{The \textsf{GAP} {C}haracter {T}able {L}ibrary, Version 1.2.1}, \textsf{GAP} package, \\ \texttt{http://www.math.rwth-aachen.de/\~{}Thomas.Breuer/ctbllib}, 2012.

\bibitem{Bur}
T.C. Burness, \emph{Simple groups, fixed point ratios and applications}, in Local representation theory and simple groups, 267--322, EMS Ser. Lect. Math., Eur. Math. Soc., Z\"{u}rich, 2018.
 
\bibitem{BG}
T.C. Burness and M. Giudici, \emph{Classical groups, derangements and primes}, Australian Mathematical Society Lecture Series, vol. 25, Cambridge University Press, Cambridge, 2016.

\bibitem{BOW}
T.C. Burness, E.A. O'Brien and R.A. Wilson, \emph{Base sizes for sporadic simple groups}, Israel J. Math. \textbf{177} (2010), 307--333.

\bibitem{BThomas}
T.C. Burness and A.R. Thomas, \emph{On the involution fixity of exceptional groups of Lie type}, Internat. J. Algebra Comput. \textbf{28} (2018), 411--466.

\bibitem{Atlas}
J.H. Conway, R.T. Curtis, S.P. Norton, R.A. Parker and R.A. Wilson, \emph{Atlas of finite groups}, Oxford University Press, 1985.

\bibitem{GAP} The GAP Group, \emph{GAP -- Groups, Algorithms, and Programming}, Version 4.11.0, 2020 \texttt{(http://www.gap-system.org)}.

\bibitem{GMPS} 
S. Guest, J. Morris, C.E. Praeger and P. Spiga, \emph{On the maximum orders of elements of finite almost simple groups and primitive permutation groups}, Trans. Amer. Math. Soc. \textbf{367} (2015), 7665--7694.

\bibitem{Isaacs}
I.M. Isaacs, \emph{Character Theory of Finite Groups}, Pure and Applied Mathematics, No. 69. Academic Press, New York-London, 1976.

\bibitem{KL} 
P.B. Kleidman and M.W. Liebeck, \emph{The {S}ubgroup {S}tructure of the {F}inite {C}lassical {G}roups}, London Math. Soc. Lecture Note Series, vol. 129, Cambridge University Press, 1990.

\bibitem{LPS}
M.W. Liebeck, C.E. Praeger and J. Saxl, \emph{A classification of the maximal subgroups of the finite alternating and symmetric groups}, J. Algebra \textbf{111} (1987), 365--383.

\bibitem{LSa2}
M.W. Liebeck and J. Saxl, \emph{Maximal subgroups of finite simple groups and their automorphism groups}, Proceedings of the International Conference on Algebra, Part 1 (Novosibirsk, 1989), 243--259, Contemp. Math., 131, Part 1, Amer. Math. Soc., Providence, RI, 1992.
 
\bibitem{LSa}
M.W. Liebeck and J. Saxl, \emph{On point stabilizers in primitive permutation groups}, Comm. Algebra \textbf{19} (1991), 2777--2786.

\bibitem{LSh}
M.W. Liebeck and A. Shalev, \emph{On fixed points of elements in primitive permutation groups}, J. Algebra \textbf{421} (2015) 438--459.

\bibitem{MW}
K. Magaard and R. Waldecker, \emph{Transitive permutation groups where nontrivial elements have at most two fixed points}, J. Pure Appl. Algebra \textbf{219} (2015), 729--759.

\bibitem{Prib}
W. Pribitkin, \emph{Simple upper bounds for partition functions}, Ramanujan J. \textbf{18} (2009), 113--119.

\bibitem{Quick}
M. Quick, \emph{Probabilistic generation of wreath products of non-abelian finite simple groups}, Comm. Algebra \textbf{32} (2004), 4753--4768.

\bibitem{Ronse} C. Ronse, \emph{On permutation groups of prime power order}, Math. Z. \textbf{173} (1980), 211--215.

\bibitem{SS} J. Saxl and A. Shalev, \emph{The fixity of permutation groups}, J. Algebra \textbf{174} (1995), 1122--1140.

\bibitem{Wil}
R.A. Wilson, \emph{Maximal subgroups of sporadic groups}, in Finite simple groups: thirty years of the Atlas and beyond, 57--72, Contemp. Math., 694, Amer. Math. Soc., Providence, RI, 2017.

\bibitem{WebAt}
R.A. Wilson et al., \emph{A {W}orld-{W}ide-{W}eb {A}tlas of finite group representations},  \\ {\texttt{http://brauer.maths.qmul.ac.uk/Atlas/v3/}}

\end{thebibliography}
\end{document}